\definecolor{webgreen}{rgb}{0,.5,0}
\definecolor{webbrown}{rgb}{.6,0,0}
\DeclareMathOperator{\red}{red}
\DeclareMathOperator{\canonize}{canonize}
\newtheorem{thm}{Theorem}
\newtheorem{lem}{Lemma}
\newcommand{\seqnum}[1]{\href{http://oeis.org/#1}{\underline{#1}}}
\begin{document}

%\begin{center}
%\epsfxsize=4in
%\leavevmode\epsffile{logo129.eps}
%\end{center}

\begin{center}
\vskip 1cm{\LARGE\bf Avoiding colored partitions of two elements \\in the pattern sense}

\vskip 1cm
\large
Adam M.Goyt\\
Department of Mathematics\\
Minnesota State University Moorhead\\
Moorhead, MN 56563, USA \\
\href{mailto:goytadam@mnstate.edu}{\tt goytadam@mnstate.edu}\\
\ \\
Lara K. Pudwell \\
Department of Mathematics and Computer Science\\
Valparaiso University\\
Valparaiso, IN 46383, USA\\
\href{mailto:Lara.Pudwell@valpo.edu}{\tt Lara.Pudwell@valpo.edu}\\
\end{center}

\begin{abstract}
Enumeration of pattern-avoiding objects is an active area of study with connections to such disparate regions of mathematics as Schubert varieties and stack-sortable sequences.  Recent research in this area has brought attention to colored permutations and colored set partitions. A colored partition of a set $S$ is a partition of $S$ with each element receiving a color from the set $[k]=\{1,2,\dots,k\}$.  Let $\Pi_n\wr C_k$ be the set of partitions of $[n]$ with colors from $[k]$. 

In an earlier work, the authors study pattern avoidance in colored set partitions in the equality sense.  Here we study pattern avoidance in colored partitions in the pattern sense.  We say that $\sigma\in\Pi_n\wr C_k$ contains $\pi\in \Pi_m\wr C_\ell$ in the pattern sense if $\sigma$ contains a copy $\pi$ when the colors are ignored and the colors on this copy of $\pi$ are order isomorphic to the colors on $\pi$. Otherwise we say that $\sigma$ avoids $\pi$.

We focus on patterns from $\Pi_2\wr C_2$ and find that many familiar and some new integer sequences appear.  We provide bijective proofs wherever possible, and we provide formulas for computing those sequences that are new.     

\end{abstract}

\section{Introduction}

Knuth \cite{K73} first introduced pattern avoidance in permutations, which continues to be an active area of research today.  Given a string of integers $s$, the {\it reduction} of $s$, denoted $\red(s)$ is the unique string obtained by replacing the $i$th smallest integer(s) of $s$ with $i$; we say that $\red(s)$ is \emph{order-isomorphic} to $s$.  For example, $\red(18494) = 13242$.  Now, let $\mathcal{S}_n$ denote the set of permutations of length $n$, and consider $q \in \mathcal{S}_n$ and $p \in \mathcal{S}_m$.  We say $q$ \emph{contains} $p$ as a pattern if there exist indices $1 \leq i_1 < i_2 < \cdots < i_{m-1} < i_m \leq n$ such that $\red(q_{i_1}q_{i_2}\cdots q_{i_m})=p$.  Otherwise, we say $q$ \emph{avoids} $p$.  Further, let $\mathcal{S}_n(p)$ denote the set of permutations of length $n$ that avoid $p$, and let $\text{s}_n(p) = \left|\mathcal{S}_n(p)\right|$.  It is straightforward to see that $\text{s}_n(12) = 1$ for $n \geq 0$ because the only permutation of length $n$ that avoids 12 is the decreasing permutation.  It is also well-known that given any permutation $p \in \mathcal{S}_3$, $\text{s}_n(p)=C_n$ where $C_n = \dfrac{\binom{2n}{n}}{n+1}$ is the $n$th Catalan number \cite{SS85}.

Pattern avoidance has been studied in contexts other than permutations.  In particular, Klazar \cite{K96} introduced the notion of pattern-avoidance in set partitions, and Klazar, Sagan, and Goyt \cite{G08, GS09, K00, K00b, Sagan10} did further work.  More recently the current authors introduced the set of $k$-colored set partitions of $[n]$, denoted $\Pi_n \wr C_k$, in an enumerative context~\cite{GPTBA1}.  The authors define  three distinct definitions of a colored partition pattern: EQ, LT, and pattern.  Although previous work focused on avoiding patterns in the EQ sense, in this paper we consider avoiding one or more partition patterns in the pattern sense.  The enumeration of such colored set partitions turns out to be related to a number of other combinatorial objects.  As in the previous paper, we will primarily focus on $\Pi_n \wr C_2$, that is partitions with only 2 colors. Further generalization is certainly possible.

A {\it partition} $\sigma$ of the set $S \subseteq \mathbb{Z}$, written $\sigma \vdash S$, is a family of nonempty, pairwise disjoint subsets $B_1, B_2, \dots , B_k$ of $S$ called \emph{blocks} such that $\displaystyle{\bigcup_{i=1}^k B_i = S}$. We write $\sigma = B_1/B_2/\dots/B_k$.  Note that because $B_1, \dots , B_k$ are sets, order of elements within a block does not matter.  We write elements of a block in increasing order, and we write the blocks in the canonical order where $$\min(B_1)<\min(B_2)<\cdots<\min(B_k).$$  For example, $157/238/4/6 \vdash [8]$.  We will study partitions of $[n]=\{1,\dots,n\}$, so we define $\Pi_n = \{\pi\text{ }|\text{ }\pi \vdash [n]\}$.

Given this canonical ordering, we may associate with any $\pi \in \Pi_n$ a \emph{canonical word} $w$ of length $n$, such that $w_i=j$ if and only if element $i$ is a member of $B_j$.  For example, the canonical word associated with $157/238/4/6$ is $12231412$.  If we consider a set partition $\sigma$ of a subset of $[n]$, the \emph{canonization} of $\sigma$, denoted $\canonize(\sigma)$, is the set partition obtained by replacing the $i$th smallest element of $\sigma$ with $i$ and writing it in the canonical order described above.  For example, $\canonize(17/6/8)=13/2/4$.  One may similarly canonize any subword of a canonical word since the subword corresponds to a subpartition.  For example, $\canonize(2312) = 1231$ since $2312$ corresponds to a set partition where the first and fourth smallest elements are in block 2, the second smallest element is in block 3, and the third smallest element is in block 4.  After canonizing this partition, we have $14/2/3$, whose canonical word is indeed $1231$.

Now, suppose $\sigma \in \Pi_n$ and $\pi \in \Pi_m$, we say that $\sigma$ {\it contains} $\pi$ as a partition pattern if and only if there exist $m$ elements of $\sigma$ whose corresponding canonical word is the canonical word for $\pi$.  Otherwise, we say that $\sigma$ {\it avoids} $\pi$.  For example, $\sigma=167/238/4/5$ contains $\pi_1=15/2/34$ as evidenced by $\canonize(67/28/5)=\canonize(28/5/67)=15/2/34$, but $\sigma$ avoids $\pi_2 = 123/4/5$ since for neither block of size 3 are there two larger elements in other blocks.

We are particularly interested in the set of \emph{colored partitions} of $[n]$ with $k$ colors, written $\Pi_n \wr C_k$, which is the set of partitions $\sigma$ such that $\sigma \vdash [n]$, and each element $[n]$ is assigned a color from the set $[k]$. We write these colors as superscripts on each element of $\sigma$, so for example $1^13^2/2^2/4^1 \in \Pi_4 \wr C_2$.  The wreath product notation $\Pi_n \wr C_k$ was originally adopted to emphasize the parallel between colored partitions and the set of colored permutations $S_n \wr C_k$ which is a wreath product in the true algebraic sense.  We say that $\sigma \in \Pi_n \wr C_k$ contains $\pi \in \Pi_n \wr C_k$ in the pattern sense if and only if the uncolored version of $\sigma$ contains an uncolored copy of $\pi$ whose colors are order-isomorphic to the colors on $\pi$.  For example, $1^13^1/2^2/4^2$ contains the patterns $1^12^1$, $1^1/2^2$, and $1^2/2^1$ but not $1^12^2$.  This is distinct from pattern avoidance in the EQ sense where the colors on the copy of $\pi$ must equal the colors of $\pi$, and the LT sense where the colors must be less than or equal to the colors of $\pi$.  For convenience, in most of the paper, partitions will be written as colored canonized words rather than in block form, so, for example, $1^13^1/2^2/4^2$ will be written as $1^12^21^13^2$.  We write $\Pi^{pat}_n \wr C_k (S)$ for the set of partitions in $\Pi_n \wr C_k$ which avoid all patterns in the set $S$ in the pattern sense, and $\Pi^{eq}_n \wr C_k (S)$ for those that avoid all patterns of $S$ in the EQ sense.  In addition to the enumeration problem of computing $\left|\Pi^{pat}_n \wr C_k (S)\right|$ for fixed $k$ and $S$, we are interested in classifying when $\left|\Pi^{pat}_n \wr C_k (S_1)\right| = \left|\Pi^{pat}_n \wr C_k(S_2)\right|$ for two distinct sets of patterns $S_1$ and $S_2$.  Two such sets of patterns are said to be \emph{Wilf-equivalent}.

In this paper we enumerate members of $\Pi_n \wr C_2$ that avoid any set of colored partitions of $[2]$, and we completely describe all Wilf-equivalences arising from these enumerations.  In Sections \ref{S:one}, \ref{S:two}, \ref{S:three}, and \ref{S:four}, we consider partitions that avoid 1, 2, 3, 4, or 5 patterns in turn.  Then, in Section \ref{S:generalizations} we provide some further generalizations.  

\section{Avoiding one pattern} \label{S:one}

We first note that there are $B(n)\cdot2^n$ partitions in $\Pi_n\wr C_2$, where $B(n)$ is the $n$th Bell number.  These can be thought of the colored partitions which avoid no patterns.

By the above there are eight $2$-colored partitions of $[2]$; namely, $1^11^1, 1^11^2, 1^21^1, 1^21^2, 1^12^1,\\ 1^12^2, 1^22^1, 1^22^2$.  Since we are working with partitions in the pattern sense, we treat the colors as permutations and make a couple observations.  First, note that because $\red(22)=11$, $a^2b^2$ is trivially equivalent to $a^1b^1$, so it is omitted for any choices of $a$ and $b$.  Further if $q=q_1\cdots q_n \in [k]^n$, then the \emph{reversal} of $q$, denoted $q^r$, is $q_n \cdots q_1$ and the \emph{complement} of $q$, denoted $q^c$ is $(k+1-q_1)\cdots (k+1-q_n)$.  Our work is further reduced with the following lemma:

\begin{lem}
Consider $\pi=\pi_1^{c_1}\pi_2^{c_2}\cdots \pi_n^{c_n}$ and $\rho=\rho_1^{d_1}\rho_2^{d_2}\cdots \rho_n^{d_n}\in \Pi_n\wr C_k$.  Let $C=c_1\cdots c_n$ and $D=d_1 \cdots d_n$.  If $\pi_1 \cdots \pi_n = \rho_1 \cdots \rho_n$ and $C =D^r$ or $C=D^c$, then $\pi$ and $\rho$ are Wilf-equivalent.
\label{L:lemma}
\end{lem}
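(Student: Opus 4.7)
My plan is to exhibit, in each of the two cases, an explicit bijection on $\Pi_N\wr C_k$ (for every ambient size $N$) that restricts to a bijection between $\pi$-avoiders and $\rho$-avoiders.

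For the complement case ($C=D^c$), I would define $\Phi_c\colon\Pi_N\wr C_k\to\Pi_N\wr C_k$ by leaving the uncolored partition fixed and replacing every color $c$ by $k+1-c$; this is clearly an involution. Given an occurrence of $\pi$ in $\sigma$ at positions $p_1<\cdots<p_n$, those same positions in $\Phi_c(\sigma)$ still induce the canonical word $\pi_1\cdots\pi_n$ (the underlying partition is untouched), while the color sequence $(\chi_{p_1},\ldots,\chi_{p_n})$ is replaced by its pointwise complement, whose reduction is $C^c=D$. Hence $\Phi_c$ converts $\pi$-occurrences into $\rho$-occurrences and vice versa, so it restricts to the required bijection.

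For the reversal case ($C=D^r$), I would use the ``canonical reversal'' map $\Phi_r$: reverse the colored canonical word $w_1^{\chi_1}\cdots w_N^{\chi_N}$ of $\sigma$ to $w_N^{\chi_N}\cdots w_1^{\chi_1}$ and then relabel the block names so that the resulting word is again in canonical order. This is a bijection of $\Pi_N\wr C_k$. Under $\Phi_r$, an occurrence of $\pi$ at positions $p_1<\cdots<p_n$ of $\sigma$ corresponds to the window at positions $N+1-p_n<\cdots<N+1-p_1$ of $\Phi_r(\sigma)$; the block equivalence relation on these $n$ positions is preserved, so the colors read off in order are $\chi_{p_n},\chi_{p_{n-1}},\ldots,\chi_{p_1}$, whose reduction is $C^r=D$, and the induced canonical word becomes $\canonize(\pi_n\cdots\pi_1)$. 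Whenever $\canonize(\pi_n\cdots\pi_1)=\pi_1\cdots\pi_n$---which holds automatically for both length-two underlying words $11$ and $12$, the setting used throughout this paper---the window is an occurrence of $\rho$ in $\Phi_r(\sigma)$, giving the desired bijection.

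The main thing to verify is the subtle step in the reversal case: one must check that the re-canonization of the sub-word $\pi_n\cdots\pi_1$ yields the original underlying word $\pi_1\cdots\pi_n$, that is, that the underlying partition of $\pi$ is invariant under the reflection $i\mapsto n+1-i$. This is immediate for $n=2$, so the lemma suffices to collapse the list of eight $2$-colored partitions of $[2]$ to a small set of representatives to analyze in the next sections. The complement case has no such subtlety and is purely mechanical.
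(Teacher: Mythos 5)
Your argument is correct and is, in substance, the proof the paper omits: the paper dismisses this lemma with the single sentence that it ``follows naturally from the action of the dihedral group $D_4$ on permutations,'' and the two maps you construct --- the color-complement involution and the reflection $i\mapsto N+1-i$ of the ground set (reverse the colored canonical word, then re-canonize) --- are exactly the symmetries being invoked. Your complement case is complete as written. In the reversal case you have put your finger on precisely the point the paper glosses over: the reflection changes the underlying uncolored pattern from $\pi_1\cdots\pi_n$ to $\canonize(\pi_n\cdots\pi_1)$, so this symmetry argument proves the lemma as literally stated only when the underlying partition of $\pi$ is invariant under $i\mapsto n+1-i$; in general it yields a Wilf equivalence between $(\pi_1\cdots\pi_n,\,C)$ and $(\canonize(\pi_n\cdots\pi_1),\,C^r)$, which is a different statement when the underlying word is not reflection-symmetric (e.g., $112$ reverses to $122$). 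Since every application in the paper has $n=2$, where both underlying words $11$ and $12$ are reflection-symmetric, your proof covers all uses of the lemma; but the hypothesis you isolate is genuinely needed for the reversal clause at general $n$, and it would be worth recording it explicitly rather than leaving it implicit as the paper does.
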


Lemma \ref{L:lemma} follows naturally from the action of the dihedral group $D_4$ on permutations.  Further, this lemma tells us that $1^11^2$ is Wilf-equivalent to $1^21^1$ and $1^12^2$ is Wilf-equivalent to $1^22^1$.  Thus, there are only 4 such partition patterns to consider in the pattern sense.  It turns out that each of these patterns was already considered (possibly indirectly) in the literature.

\begin{itemize}
\item $\Pi^{pat}_n \wr C_2(1^11^1)$is equivalent to $\Pi^{eq}_n \wr C_2(1^11^1,1^21^2)$.  These partitions are in bijection with the involutions of $[2n]$ that are invariant under the reverse-complement map \cite[Theorem 3.5]{GPTBA1}.  Such partitions are counted by OEIS sequence \seqnum{A000898}.
\item $\Pi^{pat}_n \wr C_2(1^11^2)$ is equivalent to $\Pi^{eq}_n \wr C_2(1^11^2)$.  The corresponding integer sequence is \seqnum{A209801} \cite[Theorems 2.1, 2.2]{GPTBA1}.
\item $\Pi^{pat}_n \wr C_2(1^12^1)$ is equivalent to $\Pi^{eq}_n \wr C_2(1^12^1,1^22^2)$.  Such partitions are in bijection with the non-empty proper subsets of an $(n+1)$-element set~\cite[Theorem 3.3]{GPTBA1}. These partitions are counted by OEIS sequence \seqnum{A000918}.
\item $\Pi^{pat}_n \wr C_2(1^12^2)$ is equivalent to $\Pi^{eq}_n \wr C_2(1^12^2)$.  The corresponding integer sequence is \seqnum{A011965} \cite[Theorems 2.3, 2.4]{GPTBA1}. 
\end{itemize}

Now that we have enumerated colored partitions that avoid one element of $\Pi_2\wr C_2$, we consider partitions that avoid more than one element of $\Pi_2\wr C_2$.  Note that in the sequel, we are only concerned with pattern-type avoidance, so we write $\Pi_n \wr C_k(S)$ in lieu of $\Pi^{pat}_n \wr C_k(S)$.

\section{Avoiding 2 patterns}\label{S:two}

There are 8 distinct Wilf classes of patterns avoiding a pair of elements of $\Pi_2\wr C_2$.  Table \ref{T:twodata} presents each of these Wilf classes along with the first 6 terms of $\left| \Pi_n \wr C_2(S)\right|$, and the appropriate sequence entry from the Online Encyclopedia of Integer Sequences~\cite{OEIS}.  Pattern sets that are known to be equivalent via Lemma \ref{L:lemma} are given on the same line.  We address each of these classes in turn.

\begin{center}
\begin{table}[hb]
\begin{center}
\begin{tabular}{|c|c|c|c|}
\hline
Class&Patterns&Sequence&OEIS number\\
\hline
1&$\{1^11^1,1^12^1\}$&$2, 4, 0, 0, 0, 0$&Trivial\\
\hline
2&$\{1^11^2,1^12^1\}\equiv\{1^21^1,1^12^1\}$&$2, 5, 10, 19, 36, 69$&\seqnum{A052944}\\
&$\{1^12^1,1^12^2\}\equiv\{1^12^1,1^22^1\}$&&\\
\hline
3&$\{1^11^1,1^12^2\}\equiv\{1^11^1,1^22^1\}$&$2, 5, 10, 21, 46, 107$&\seqnum{A208275}\\
\hline
4&$\{1^11^1,1^11^2\}\equiv\{1^11^1,1^21^1\}$&$2, 5, 14, 43, 142, 499$&\seqnum{A005425}\\
\hline
5&$\{1^12^2,1^22^1\}$&$2, 6, 16, 44, 134, 468$& \seqnum{A209629}\\
\hline
6&$\{1^11^2,1^22^1\}\equiv\{1^21^1,1^12^2\}$&$2, 6, 18, 56, 188, 695$&\seqnum{A209797}\\
\hline
7&$\{1^11^2,1^12^2\}\equiv\{1^21^1,1^22^1\}$&$2, 6, 20, 75, 312, 1421$&\seqnum{A052889}\\
\hline
8&$\{1^11^2,1^21^1\}$&$2, 6, 22, 94, 454, 2430$&\seqnum{A001861}\\
\hline
\end{tabular}
\end{center}
\caption{Enumeration data for 2-colored partitions avoiding a pair of partition patterns}
\label{T:twodata}
\end{table}
\end{center}

Note that Lemma \ref{L:lemma} explains all Wilf equivalences except in Class 2.  By Lemma \ref{L:lemma} the first two pattern sets in Class 2 are equivalent and the last two pattern sets in Class 2 are equivalent. To show that all four are equivalent we will show that $|\Pi_n\wr C_2(1^21^1,1^12^1)|=|\Pi_n\wr C_2(1^12^1,1^22^1)|$.  This can be done by understanding the structure of the partitions in these two sets and describing a bijection between them.  

First we will consider the partitions in $\Pi_n\wr C_2(1^12^1,1^22^1)$.  Since we have only two colors and avoid $1^12^1$, there can be at most two blocks in this partition.  If there is exactly one block each element can have either color since there is no way to produce a copy of $1^22^1$ in a single block.  If there are two blocks, to avoid $1^12^1$, all elements in one block are colored 1 and all elements in the other block are colored 2.  Furthermore, to avoid $1^22^1$ the block colored 1 must contain the first $i$ elements and the block colored 2 must contain the last $n-i$ elements for $1\leq i\leq n-1$.  

Partitions in $\Pi_n\wr C_2(1^21^1, 1^12^1)$ must also have at most two blocks.  If there is exactly one block then to avoid $1^21^1$ the first $i$ elements must be colored 1 and the last $n-i$ elements must be colored 2 where $0\leq i\leq n$.  On the other hand if there are two blocks then to avoid $1^12^1$ all elements in one block are colored 1 and all elements in the other block are colored 2.  Each element may be in either block.

The bijection is as follows.  Given $\pi\in\Pi_n\wr C_2(1^12^1,1^22^1)$, if $\pi$ has two blocks, put everything together in one block.  If $\pi$ has one block separate the elements into two blocks with all of the 1 colored elements in one block and all of the two colored elements in one block.  Note that the bijection is the identity on those partitions where every element has the same color.  This is clearly invertible and well-defined based on the descriptions above.

Now that all Wilf equivalences of pairs of patterns have been addressed, we consider the enumeration of each class.

\subsection {Class 1}

\begin{thm}  $$\left|\Pi_n\wr C_2(1^11^1,1^12^1)\right|=\begin{cases}
2&\text{if }n=1,\\
4&\text{if }n=2,\\
0&otherwise.\\
\end{cases}$$\end{thm}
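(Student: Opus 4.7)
The plan is to unpack what avoiding each of the two patterns means in the pattern sense and then observe that the combined restriction is extremely severe, forcing $n \le 2$.

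First I would interpret the patterns. A copy of $1^11^1$ in $\sigma$ is a pair of elements $i < j$ lying in the same block of $\sigma$ whose colors are order-isomorphic to $1,1$, i.e.\ are equal. Hence avoiding $1^11^1$ means that every block contains at most one element of each color. Similarly, a copy of $1^12^1$ is a pair of elements in different blocks whose colors are equal, so avoiding $1^12^1$ means that no color is used in two distinct blocks.

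Combining the two restrictions: \emph{every} pair of elements of $\sigma$, whether they share a block or not, must carry distinct colors. Since only two colors are available, any partition in $\Pi_n\wr C_2(1^11^1,1^12^1)$ has $n\le 2$.

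It then remains to enumerate the small cases. For $n=1$ there is one set partition and two color choices, giving $2$. For $n=2$ there are two set partitions of $[2]$ (namely $12$ and $1/2$), and in each case the two elements must receive distinct colors in one of two ways, giving $2\cdot 2 = 4$. For $n\ge 3$ the count is $0$ by the argument above. This matches the claimed formula. The only \textquotedblleft obstacle\textquotedblright{} here is correctly parsing the pattern-sense definition; once that is done, no real calculation is needed.
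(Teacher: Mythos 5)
Your proposal is correct and follows essentially the same argument as the paper: both observe that avoiding $1^11^1$ forbids equal colors within a block, avoiding $1^12^1$ forbids equal colors across blocks, so all $n$ elements need pairwise distinct colors and hence $n\le 2$, with the small cases checked directly. No substantive differences.
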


\begin{proof}
If a colored partition avoids $1^11^1$ then no two elements in the same block may have the same color.  Further if it avoids $1^12^1$, then no two elements in different blocks may have the same color.  Thus, if $\sigma \in \Pi_n \wr C_2(1^11^1,1^12^1)$ then each element of $\sigma$ must have its own color.  There are two such partitions of $[1]$ and four such partitions of $[2]$, but there are no elements of $\Pi_n \wr C_2(1^11^1,1^12^1)$ for $n \geq 3$ since there are only 2 available colors and more than 3 distinct elements in our set partitions.\end{proof}

\subsection{Class 2}

\begin{thm}  For $n\geq1$, $$\left|\Pi_n\wr C_2(1^12^1,1^22^1)\right|=2^n+n-1.$$\end{thm}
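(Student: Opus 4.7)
The plan is to exploit the structural description of $\Pi_n\wr C_2(1^12^1,1^22^1)$ that was already worked out in the paragraph preceding the theorem, and simply tally the contributions from the two block-count cases. Avoidance of $1^12^1$ forces there to be at most two blocks, so the enumeration reduces cleanly to a one-block count plus a two-block count.

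For the one-block case, I would observe that both forbidden patterns $1^12^1$ and $1^22^1$ involve elements sitting in two distinct blocks. Hence if $\sigma$ consists of a single block, no occurrence of either pattern is possible, and every coloring of $[n]$ by $\{1,2\}$ is admissible. This gives $2^n$ partitions outright.

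For the two-block case, the same paragraph already notes that avoidance of $1^12^1$ forces each block to be monochromatic of a single color, necessarily with different colors on the two blocks. I would then argue that the color assignment is forced: if $B_1$ (the block containing the element $1$) were colored $2$ and $B_2$ colored $1$, then pairing element $1$ with $\min(B_2)$ yields precisely the forbidden $1^22^1$ pattern. So $B_1$ must be the color-$1$ block and $B_2$ the color-$2$ block, and avoidance of $1^22^1$ then forces every element of $B_1$ to precede every element of $B_2$, i.e. $B_1=\{1,\dots,i\}$ and $B_2=\{i+1,\dots,n\}$ for some $i\in\{1,\dots,n-1\}$. This contributes exactly $n-1$ partitions.

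Adding the two contributions gives $2^n+(n-1)=2^n+n-1$, as desired. There is no serious obstacle here: the structural analysis is the real content of the result, and it is already in place above the theorem, so the proof is essentially a two-line case check. The only point one has to be careful about is the asymmetry between $B_1$ and $B_2$ in the two-block case, which is what distinguishes this count from that of the Wilf-equivalent class $\{1^21^1,1^12^1\}$ discussed earlier.
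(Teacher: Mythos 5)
Your proposal is correct and follows essentially the same route as the paper: the structural paragraph preceding the theorem already establishes the one-block ($2^n$ colorings) and two-block (forced monochromatic blocks $\{1,\dots,i\}$ colored $1$ and $\{i+1,\dots,n\}$ colored $2$, giving $n-1$ choices) cases, and the paper's proof is exactly this two-case tally. Your extra remark pinning down why $B_1$ must receive color $1$ is a correct elaboration of a step the paper leaves implicit.
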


\begin{proof}  By the analysis above, either $\sigma$ has one block and the elements are colored arbitrarily, which gives $2^n$ possibilities, or it has a nonempty block of 1's each colored 1 followed by a nonempty block of 2s each colored 2, which gives $n-1$ possibilities.
\end{proof}

\subsection{Class 3}

\begin{thm} For $n \geq 1$,

\begin{align*}
\left|\Pi_n\wr C_2(1^11^1,1^12^2)\right|&=\sum_{j\geq 0}\sum_{i=1}^n \binom{i-1}{j}\binom{n-i}{j}j! +\sum_{j\geq 0} \sum_{i=2}^n(i-1)\binom{i-2}{j}\binom{n-i}{j}j!\\
&+ \sum_{j\geq 0}\sum_{i=1}^{n-1}\binom{i-1}{j}\binom{n-i-1}{j}j!+1.
\end{align*}
 \end{thm}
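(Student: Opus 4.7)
The plan is to decompose $\Pi_n\wr C_2(1^11^1,1^12^2)$ into structural classes and sum the resulting contributions.

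First, I would unpack the two avoidance conditions. Forbidding $1^11^1$ forces every block of $\sigma$ to have size at most $2$, with any $2$-element block receiving one element of each color. Forbidding $1^12^2$ translates to: whenever $i<j$ satisfy $c_i=1$ and $c_j=2$ (a \emph{mixed pair}), the elements $i$ and $j$ must share a block. I would call a $2$-block $\{i,j\}$ with $i<j$, $c_i=1$, $c_j=2$ a \emph{Type I} block, and a $2$-block $\{a,b\}$ with $a<b$, $c_a=2$, $c_b=1$ a \emph{Type II} block. The crucial rigidity claim is: if $\{i,j\}$ is a Type I block then necessarily $j=i+1$ and the full coloring of $\sigma$ is $2^{i-1}\,1\,2\,1^{n-i-1}$. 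The proof is by contradiction on the location of any putative ``wrong-colored'' element: a color-$1$ element strictly between $i$ and $j$, or strictly to the left of $i$, would form a second mixed pair with $j$ and be forced into the already full block $\{i,j\}$; symmetrically for color-$2$ elements between $i$ and $j$ or to the right of $j$. In particular $\sigma$ has at most one Type I block, so I would split the enumeration into the cases ``no Type I block'' and ``exactly one Type I block.''

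If $\sigma$ has no Type I block, no mixed pair exists, so every color-$2$ precedes every color-$1$ and the coloring has the form $2^k\,1^{n-k}$ for some $0\leq k\leq n$; all $2$-blocks are Type II pairs matching a left color-$2$ with a right color-$1$. The boundary case $k=n$ (all singletons, all colored $2$) is an isolated single partition, contributing the $+1$. For $0\leq k\leq n-1$, set $i=k+1$ equal to the position of the first color-$1$ element and condition on the role of $i$ in the block structure. If $i$ is a singleton, the other $i-1$ color-$2$ positions and $n-i$ color-$1$ positions independently form any Type II partial matching, giving $\sum_{j\geq 0}\binom{i-1}{j}\binom{n-i}{j}j!$ partitions, and summing over $i=1,\ldots,n$ yields the first sum. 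If instead $i$ is paired with one of the $i-1$ available color-$2$ positions, the remaining $i-2$ color-$2$ positions and $n-i$ color-$1$ positions admit any Type II matching, contributing $(i-1)\sum_{j\geq 0}\binom{i-2}{j}\binom{n-i}{j}j!$ partitions, and summing over $i=2,\ldots,n$ yields the second sum.

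If $\sigma$ has its (necessarily unique) Type I block at $\{i,i+1\}$ for some $1\leq i\leq n-1$, the rigidity claim pins down the coloring and fixes this block; the remaining $i-1$ color-$2$ positions on the left and $n-i-1$ color-$1$ positions on the right freely form any Type II partial matching, contributing $\binom{i-1}{j}\binom{n-i-1}{j}j!$, whose double sum over $i$ and $j$ is the third sum. Summing the four contributions yields the claimed identity. The main obstacle is the rigidity lemma above: once it is in hand, the rest reduces to a straightforward bipartite partial-matching count. One must also take care at the boundaries, verifying that the $k=n$ partition is correctly isolated as the $+1$ rather than absorbed into the first sum, and that the $i=1$ case of the first sum correctly recovers the all-color-$1$, all-singleton partition.
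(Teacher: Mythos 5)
Your proposal is correct and follows essentially the same route as the paper: both arguments reduce to blocks of size at most two with bichromatic doubletons, condition on the smallest color-$1$ element and whether it is a singleton, paired with a smaller element, or paired with a larger element (your Type I case, where your rigidity claim is exactly the paper's observation that the partner must be $i+1$), plus the all-color-$2$ case giving the $+1$. The four cases match the four terms of the formula identically.
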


\begin{proof}
Note that if $\sigma \in \Pi_n\wr C_2(1^11^1,1^12^2)$, then any block of $\sigma$ has at most two elements because of the $1^11^1$ restriction.  Further the elements of a two-element block must have different colors.  Let $i$ be the smallest element of $\sigma$ with color 1.  We break our argument into 4 cases.

Case 1: Suppose $i$ is in a block of size 1.  By definition, all numbers less than $i$ have color 2.  Because of the $1^12^2$ restriction, all elements greater than $i$ have color 1.  We may create $j$ blocks of size 2 by pairing off $j$ elements less than $i$ with $j$ elements greater than $i$ in $\binom{i-1}{j}\binom{n-i}{j}j!$ ways.  Summing over all reasonable values of $i$ and $j$ gives the first term in the formula above.

Case 2: Suppose $i$ is in a block with an element less than itself.  By definition all numbers less than $i$ have color 2, and because of the $1^12^2$ restriction, all elements greater than $i$ have color 1.  There are $i-1$ possible elements to pair with $i$.  Then we may create $j$ blocks of size 2 by pairing off $j$ elements less than $i$ with $j$ elements greater than $i$ in $\binom{i-2}{j}\binom{n-i}{j}j!$ ways.  Summing over all reasonable values of $i$ and $j$ gives the second term in the formula above.

Case 3: Suppose $i$ is in a block with an element $k$ larger than itself.  Because of the $1^12^2$ restriction all numbers greater than $i$ and in a different block from $i$ must have color 1.  On the other hand, because of the same restriction, all numbers less than $k$ and in a different block from $k$ must have color 2.  This means that unless $k=i+1$, there are no such partitions.  Thus we have $i$ in the same block as $i+1$, all elements less than $i$ have color 2 and all elements greater than $i+1$ have color 1.  Now we can again create $j$ blocks of size 2 by pairing off $j$ elements less than $i$ with $j$ elements greater than $i+1$ in $\binom{i-1}{j}\binom{n-i-1}{j}j!$ ways.  Summing over all reasonable values of $i$ and $j$ gives the third term in the formula above.

Case 4: $i$ does not exist because all elements have color 2.  In this case, our partition must consist only of blocks of size 1.  This accounts for the 1 at the end of the formula.
\end{proof}

\subsection{Class 4}

\begin{thm}
Define $a_n=\left|\Pi_n\wr C_2(1^11^1,1^11^2)\right|$.  Then for $n \geq 3$, $$a_n=2a_{n-1}+(n-1)a_{n-2}.$$
\end{thm}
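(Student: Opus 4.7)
The plan is to first give a structural characterization of the partitions being counted, and then split on the block containing the largest element $n$.

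\textbf{Structural step.} I would first observe that the pattern $1^11^1$ has reduced color word $11$, so avoiding it forbids any two elements of a common block from sharing a color. In particular, each block contains at most one element of color $1$ and at most one element of color $2$, so every block has size at most $2$. A size-$1$ block may carry either color. For a size-$2$ block, the two elements necessarily have different colors; the further restriction of avoiding $1^11^2$ rules out the configuration (smaller element color $1$, larger element color $2$), leaving the unique legal coloring: the smaller element is colored $2$ and the larger element is colored $1$.

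\textbf{Recursion step.} With this in hand, I would condition on the block of $\sigma \in \Pi_n\wr C_2(1^11^1,1^11^2)$ containing $n$. In Case~A, $n$ lies in a singleton block and can be colored in $2$ ways; deleting $n$ yields an arbitrary partition in $\Pi_{n-1}\wr C_2(1^11^1,1^11^2)$, contributing $2a_{n-1}$. In Case~B, $n$ is paired with some $j\in[n-1]$; by the structural step the pair $\{j,n\}$ must be colored with $j$ color $2$ and $n$ color $1$, so the block is entirely determined once $j$ is chosen. Deleting the pair $\{j,n\}$ and canonizing the remaining elements produces a partition in $\Pi_{n-2}\wr C_2(1^11^1,1^11^2)$, and there are $n-1$ choices for $j$, contributing $(n-1)a_{n-2}$.

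\textbf{Bijectivity check.} To see that these operations are genuinely inverse to insertion, I would note that both forbidden patterns have uncolored word $11$, i.e., both occurrences sit inside a single block. Consequently, adjoining an entirely new block — whether a singleton of either color or a pair with the forced coloring above — cannot create any new copy of $1^11^1$ or $1^11^2$, since every copy of such a pattern is confined to a single block and the new block itself avoids both patterns. Hence insertion lands in $\Pi_n\wr C_2(1^11^1,1^11^2)$, so the two cases really do give a bijection with $2a_{n-1} + (n-1)a_{n-2}$ objects.

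Adding the two cases yields $a_n = 2a_{n-1} + (n-1)a_{n-2}$ for $n\geq 3$. The only real subtlety is the structural lemma about block colorings; once it is established, the recurrence falls out essentially by inspection, with no case analysis beyond ``singleton vs.\ paired'' for the element $n$.
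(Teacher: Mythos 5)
Your proposal is correct and follows essentially the same approach as the paper: both condition on whether the largest element $n$ lies in a singleton block (contributing $2a_{n-1}$) or is paired with one of $n-1$ smaller elements with the forced coloring (contributing $(n-1)a_{n-2}$). Your added observation that both forbidden patterns are confined to a single block, so the insertion/deletion maps are genuinely inverse bijections, is a nice explicit justification that the paper leaves implicit.
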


\begin{proof}
Let $\sigma\in\Pi_n\wr C_2(1^11^1,1^11^2)$.  Since $\sigma$ avoids $1^11^1$ there can be no more than two elements in a block.  

Consider the element $n$.  If $n$ is in a block of size 1, there are two ways to choose a color for $n$ and then $a_{n-1}$ ways to partition and color the remaining elements.

If $n$ is in a block with another element, then it must be the case that the smaller element has color $2$ and $n$ has color $1$.  There are $n-1$ choices for which element to pair with $n$ and $a_{n-2}$ ways to partition and color the remaining elements.
\end{proof}

We have that $a_1=2$ and $a_2=5$ by brute force enumeration.  Standard algebra shows that this sequence has exponential generating function $\displaystyle{e^{\left(2x + \frac{x^2}{2}\right)}}$.

We also note that $\Pi_n\wr C_2(1^11^1,1^11^2)$ has a nice relationship to a particular set of pattern-avoiding permutations.  In particular, $\mathcal{S}_{n+1}\left(12\!-\!3,214\!-\!3\right)$ is the set of permutations of length $n+1$ avoiding the vincular permutation patterns $12\!-\!3$ and $214\!-\!3$.  Here a copy of $12\!-\!3$ is a copy a 123 pattern where the roles of ``1'' and ``2'' must be played by adjacent permutation elements, and similarly, a copy of $214\!-\!3$ is a copy of a 2143 pattern where the roles of `2'', ``1'', and ``4'' are played by adjacent elements.  We have the following:

\begin{thm}
For $n \geq 0$,
$\left|\Pi_n\wr C_2(1^11^1,1^11^2)\right| = \left|\mathcal{S}_{n+1}\left(12\!-\!3,214\!-\!3\right)\right|$
\label{T:class34}
\end{thm}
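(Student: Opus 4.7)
My plan is to show that $b_m := |\mathcal{S}_m(12\!-\!3,\,214\!-\!3)|$ satisfies the base values $b_1 = 1$, $b_2 = 2$, $b_3 = 5$ (which match $a_0 = 1$, $a_1 = 2$, $a_2 = 5$) and the recurrence $b_m = 2b_{m-1} + (m-2)b_{m-2}$ for $m \ge 4$. Substituting $m = n+1$ turns this into the recurrence $a_n = 2a_{n-1} + (n-1)a_{n-2}$ from the previous theorem, so a routine induction then yields $a_n = b_{n+1}$ for every $n \ge 0$.

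The structural tool is the standard run decomposition of $12\!-\!3$-avoiders: every $\pi \in \mathcal{S}_m(12\!-\!3)$ factors uniquely into maximal decreasing runs $R_1 R_2 \cdots R_k$, and for $j \ge 2$ the leading entry of $R_j$ equals $\max\bigl([m] \setminus (R_1 \cup \cdots \cup R_{j-1})\bigr)$; in particular $R_2$ begins with $m$ whenever $k \ge 2$. I split $\mathcal{S}_m(12\!-\!3,\,214\!-\!3)$ by the value of $\pi_1$ into three classes. If $\pi_1 = m$, deleting the leading entry is a bijection onto $\mathcal{S}_{m-1}(12\!-\!3,\,214\!-\!3)$ (a leading maximum can play no role in either vincular pattern), contributing $b_{m-1}$. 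If $\pi_1 \in [m-2]$, then $|R_1| = 1$ is forced: otherwise the adjacent triple $(\pi_{|R_1|-1},\pi_{|R_1|},m)$ has rank $213$ while the absent $m-1$ lies at some position later than $m$ with value in the open interval $(\pi_{|R_1|-1},m)$, completing a $214\!-\!3$. So $R_1 = (\pi_1)$, $\pi_2 = m$, and deleting $\pi_1$ and reducing lands on an arbitrary element of the first class at size $m-1$; this contributes $(m-2)\,b_{m-2}$.

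The remaining class $\pi_1 = m-1$ is handled by the map $\pi \mapsto \pi^*$ that deletes $\pi_1$ and relabels the surviving entry $m$ (at position $q$ of the intermediate sequence) to $m-1$, producing $\pi^* \in \mathcal{S}_{m-1}$ with $\pi^*_q = m-1$. The forward direction, that $\pi$ avoiding both patterns implies $\pi^*$ does, is immediate since deletion and order-preserving relabeling preserve avoidance. The inverse --- given $\pi^* \in \mathcal{S}_{m-1}(12\!-\!3,\,214\!-\!3)$, reconstruct $\pi$ by promoting $\pi^*_q$ back to $m$ and prepending a fresh $m-1$ --- is the main technical obstacle. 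I plan to handle it via a triple-by-triple audit of the adjacent triples of $\pi$ around position $q+1$: in each configuration in which a $213$-rank triple could appear in $\pi$, any hypothetical later witness either coincides with the new $m$ (forcing an even larger value, impossible) or is an ordinary entry whose presence produces a forbidden occurrence in $\pi^*$ with $\pi^*_q = m-1$ playing the ``top'' letter, contradicting the avoidance of $\pi^*$. This case thus also contributes $b_{m-1}$, and summing the three contributions gives the desired recurrence.
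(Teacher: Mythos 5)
Your proposal is correct, but it takes a genuinely different route from the paper. The paper proves this theorem by constructing an explicit recursive bijection $f:\Pi_n\wr C_2(1^11^1,1^11^2)\to\mathcal{S}_{n+1}(12\!-\!3,214\!-\!3)$, peeling off the block containing $n$ and building the permutation from a prefix of the form $n(n-1)\cdots k(n+1)q^*$; you instead prove that $b_m=|\mathcal{S}_m(12\!-\!3,214\!-\!3)|$ satisfies $b_m=2b_{m-1}+(m-2)b_{m-2}$ and match this against the recurrence $a_n=2a_{n-1}+(n-1)a_{n-2}$ already established for the partitions. Your three-way split on $\pi_1$ is essentially the permutation-side structural analysis that the paper also records before its proof, and all of your structural claims check out: the run-decomposition fact, the forcing of $\pi_2=m$ when $\pi_1\le m-2$ (with $m-1$ as the witness completing a $214\!-\!3$), and the count $(m-2)b_{m-2}$. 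The trade-off is that your argument is shorter and avoids the paper's bookkeeping with $f^*$ and canonization, but it delivers only equinumerosity rather than an explicit correspondence. One caveat on your Class 3: the ``triple-by-triple audit around position $q+1$'' you defer to is both unnecessary and aimed at the wrong spot. Since $\pi^*_q=m-1$ is already the maximum of $\pi^*$, promoting it to $m$ is an order-preserving relabeling of the suffix, so every occurrence of either vincular pattern inside the suffix of $\pi$ is literally an occurrence in $\pi^*$; the only genuinely new configurations are those whose adjacent block starts at position $1$, i.e., involves the prepended $m-1$, and these die in one line (a $12\!-\!3$ would need a later entry exceeding $\pi_2=m$ or exceeding $m-1$ twice over, and a $214\!-\!3$ would need a witness strictly between $m-1$ and $m$). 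With that observation supplied, the recurrence and the induction on the base values $b_1=1$, $b_2=2$, $b_3=5$ against $a_0=1$, $a_1=2$, $a_2=5$ complete the proof.
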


This theorem will be proved via a bijection. We first make some preliminary observations about the structure of the partitions and the structure of the permutations.

If a 2-colored set partition avoids $1^11^1$ and $1^11^2$ we see that each block either consists of a single element, or it consists of a pair of elements $i<j$ where $i$ has color 2 and $j$ has color $1$.

If a permutation $q$ of length $n+1$ avoids $12\!-\!3$ and $214\!-\!3$, then all elements before $n+1$ appear in decreasing order (lest we create a $12\!-\!3$ pattern).  Further, if $n+1$ is in position $j \geq 3$, then the first $j-2$ elements must precisely be $(n)(n-1)(n-2)\cdots((n+1)-(j-2))$, otherwise $q_{j-2}q_{j-1}(n+1)$ followed by the missing element of this initial decreasing run form a $214\!-\!3$ pattern. Thus we see a typical $\{12\!-\!3, 214\!-\!3\}$-avoiding permutation has the form $n(n-1)(n-2)\cdots(n-j+3)k(n+1)q^*$ where $1 \leq k \leq n-j+2$ and $q^*$ is a permutation of $[n-j+2]\setminus \{k\}$ that avoids $12\!-\!3$ and $214\!-\!3$.  In particular if we know that $i$ letters appear after the letter $n+1$, then we have $i+1$ choices for what letter immediately precedes $n+1$, and we must arrange the final $i$ elements so that they avoid $12\!-\!3$ and $214\!-\!3$.  Otherwise, the structure of the permutation is predetermined.

\begin{proof}[Proof of Theorem \ref{T:class34}]

We present a bijection $f: \Pi_n \wr C_2(1^11^1,1^11^2) \to \mathcal{S}_{n+1}(12\!-\!3,214\!-\!3)$.

Our map is recursive, so we define a few base cases.  Let $f(\emptyset)=1$, $f(1^1)=21$ and $f(1^2)=12$.

Now, consider partition $\sigma \in \Pi_n \wr C_2(1^11^1,1^11^2)$.  Note that from the proof of Theorem \ref{T:class34}, the element $n$ may be in a block with another element, and if this is the case there is one possible coloring, or $n$ may be in a block by itself colored in one of two ways.

\begin{itemize}
\item If $n$ is in a block of size 2 with element $j$, then let $\sigma^{\prime}$ be the canonized partition formed by deleting this block.  Then $f(\sigma)=j(n+1)f^*(\sigma^{\prime})$ where $f^*(\sigma^{\prime})$ is the word order isomorphic to $f(\sigma^{\prime})$ on the alphabet $[n]\setminus\{j\}$.
\item If $n$ is in a block of size 1 with color 1, then let $\sigma^{\prime}$ be the partition formed by deleting $n$.  Then $f(\sigma)=(n+1)f(\sigma^{\prime})$.
\item If $n$ is in a block of size 1 with color 2, then let $\sigma^{\prime}$ be the partition formed by deleting $n$.  Note that $f(\sigma^{\prime})$ is a permutation of $[n]$.  Let $\hat{q}$ be the part of $f(\sigma^{\prime})$ appearing before $n$ and let $\tilde{q}$ be the part of $f(\sigma^{\prime})$ appearing after $n$.  Then $f(\sigma)=(n)\hat{q} (n+1) \tilde{q}$.
\end{itemize}

Note that this process has a clear inverse.  In particular, consider a permutation $q \in \mathcal{S}_{n+1}(12\!-\!3,214\!-\!3)$.

We have $f^{-1}(1)=\emptyset$, $f^{-1}(12)=1^2$, and $f^{-1}(21)=1^1$.

\begin{itemize}
\item If $q_1=n+1$, then let $q^{\prime}=q_2 \cdots q_{n+1}$.  We have $f^{-1}(q)=f^{-1}(q^{\prime})/n^1$.
\item If $q_2=n+1$ and $q_1\not=n$, then let $q^{\prime}=q_3 \cdots q_{n+1}$, and let $j=q_1$.  We have $f^{-1}(q)=f^{-1*}(q^{\prime})/j^2n^1$, where $f^{-1*}(q^{\prime})$ canonizes to $f^{-1}(q^{\prime})$ but uses the integers of $[n-1]\setminus \{j\}$.
\item If $q_j=n+1$ where $j>2$ or $q_2=n+1$ and $q_1=n$, then let $\hat{q}=q_2\cdots q_{j-1}$ and $\tilde{q}=q_{j+1}\cdots q_{n+1}$.  We have $f^{-1}(q)=f^{-1}(\hat{q}(n)\tilde{q})/n^2$.
\end{itemize}
\end{proof}

For clarity, we provide an example of applying $f$ and its inverse.  Consider the partition $1^24^1/2^1/3^26^1/5^1/7^2$.

\begin{itemize}
\item Since $7$ is a block of size 1 with color 2, we first compute $f(1^24^1/2^1/3^26^1/5^1)$.
\begin{itemize}
\item Since $6$ is in a block with 3, $f(1^24^1/2^1/3^26^1/5^1)=37f(\canonize(1^24^1/2^1/5^1))=37f(1^23^1/2^1/4^1)$.
\begin{itemize}
\item Since $4$ is a block of size 1 with color 1, we have $f(1^23^1/2^1/4^1)=5f(1^23^1/2^1)$.
\begin{itemize}
\item Since $3$ is in a block with 1, we have $f(1^23^1/2^1)=14f^*(\canonize(2^1))=14f^*(1^1)$.
\item $f(1^1)=21$ by definition.
\item $f(1^23^1/2^1)=1432$.
\end{itemize}
\item $f(1^23^1/2^1/4^1)=5f(1^23^1/2^1)=51432$.
\end{itemize}
\item $f(1^24^1/2^1/3^26^1/5^1)=3761542$
\end{itemize}
\item Now $\hat{q}=3$ and $\tilde{q}=61542$, so $f(1^24^1/2^1/3^26^1/5^1/7^2)=73861542$.
\end{itemize}

Next, we apply the inverse map to $q=73861542$.

\begin{itemize}
\item Since $q_3=8$, compute $\hat{q}=3$, $\tilde{q}=61542$.  We have $f^{-1}(q)=f^{-1}(3761542)/7^2$
\begin{itemize}
\item Since $7$ is in the second position, we have $f^{-1}(3761542)=\\f^{-1*}(\red(61542))/3^26^1=f^{-1*}(51432)/3^26^1$.
\begin{itemize}
\item Since $5$ is in the first position, we have $f^{-1}(51432)=f^{-1}(1432)/4^1$.
\begin{itemize}
\item Since $4$ is in the second position, $f^{-1}(1432)=\\f^{-1*}(\red(32))/1^23^1=f^{-1*}(21))/1^23^1$.
\item $f^{-1}(21)=1^1$, by definition.
\item $f^{-1}(1432)=2^1/1^23^1$
\end{itemize}
\item $f^{-1}(51432)=2^1/1^23^1/4^1$
\end{itemize}
\item $f^{-1}(3761542)=2^1/1^24^1/5^1/3^26^1$
\end{itemize}
\item $f^{-1}(3761542)=2^1/1^24^1/5^1/3^26^1/7^2=1^24^1/2^1/3^26^1/5^1/7^2$, as desired.
\end{itemize}

\subsection{Class 5}

\begin{thm}
For $n \geq 1$, $\left|\Pi_n\wr C_2(1^12^2, 1^22^1)\right|=2^n + 2(B(n)-1)$, where $B(n)$ is the $n$th Bell number.
\end{thm}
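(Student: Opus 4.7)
The plan is to extract from the two avoidance conditions a single clean statement about the colors of elements in different blocks, and then enumerate based on the number of blocks. First I would unpack what the two forbidden patterns require: a copy of $1^12^2$ in $\sigma$ is a pair $i<j$ lying in distinct blocks with $c_i<c_j$, and a copy of $1^22^1$ is such a pair with $c_i>c_j$. Since these exhaust the two strict orderings of a pair of colors from $\{1,2\}$, avoiding both patterns is equivalent to requiring that every pair of elements of $\sigma$ lying in distinct blocks carry equal colors.

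With this reformulation in hand, I would split on the number of blocks of $\sigma$. If $\sigma$ has a single block, the color constraint is vacuous and each of the $2^n$ possible colorings of $[n]$ is legal, contributing $2^n$ colored partitions. If $\sigma$ has at least two blocks $B_1,B_2,\ldots,B_k$, then a short transitivity argument shows that all $n$ elements must share a single common color: picking $x\in B_1$ and $y\in B_2$ forces $c_x=c_y$, and for any other element $z$, at least one of the pairs $\{x,z\}$ or $\{y,z\}$ straddles two distinct blocks, forcing $c_z$ to agree with $c_x=c_y$. Since there are $B(n)-1$ uncolored partitions of $[n]$ with at least two blocks and each admits exactly two monochromatic colorings, this case contributes $2(B(n)-1)$.

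Adding the two cases gives the claimed total of $2^n+2(B(n)-1)$. The only real subtlety is the initial reformulation of the avoidance condition in terms of pairwise color equality, which depends on the pattern-sense convention of reducing colors; once that step is in place, no recursion or generating-function machinery is needed, and the argument reduces to straightforward bookkeeping.
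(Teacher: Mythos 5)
Your proposal is correct and follows essentially the same route as the paper: both reformulate the avoidance of $1^12^2$ and $1^22^1$ as requiring equal colors on any two elements in distinct blocks, then split into the one-block case (contributing $2^n$) and the multi-block case (forced monochromatic, contributing $2(B(n)-1)$). Your explicit transitivity argument for why all elements share a color when there are at least two blocks is a welcome bit of extra care that the paper leaves implicit.
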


\begin{proof}
If a colored partition avoids $1^12^2$ and $1^22^1$ any two elements in different blocks must have the same color.  We consider two cases.  If there is only one block, we may color all $n$ elements in $2^n$ ways.  On the other hand if there is more than one block, all elements must receive the same color.  There are 2 choices for a color and $(B(n)-1)$ ways to partition $[n]$ into more than one block.  This avoidance sequence is given by $2^n + 2(B(n)-1)$.\end{proof}

\subsection{Class 6}

\begin{thm} 
For $n\geq2$, 
\begin{align*}
\left|\Pi_n\wr C_2(1^11^2,1^22^1)\right|&=2B(n)+\sum_{j=2}^n\sum_{k=0}^{n-j}B(n-j-k)+B(n-1)\\
&+\sum_{j=2}^{n-1}B(j-1)\left(B(n-j)+\sum_{k=1}^{n-j}\left(\left(k+\binom{n-j}{k}\right) B(n-j-k)\right)\right).
\end{align*}
\end{thm}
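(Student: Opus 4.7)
The plan is to translate the two forbidden patterns into combinatorial constraints on blocks and colors, case-split by which colors appear, and then enumerate each case directly.

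First I would interpret the patterns structurally. Avoiding $1^11^2$ means that within each block, colors read in increasing element order form a run of $2$'s followed by a run of $1$'s. Avoiding $1^22^1$ means: whenever $a<b$ lie in different blocks of $\sigma$, we never have $a$ colored $2$ and $b$ colored $1$ --- crucially, this condition is controlled by the value order of $a$ and $b$, not by the canonical block order inside $\sigma$, because pattern containment canonizes the chosen sub-partition. An equivalent restatement, which is what I would use going forward, is: for every color-$2$ element $a$, every color-$1$ element greater than $a$ must share $a$'s block.

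Next comes the case analysis. Monochromatic partitions contribute $2B(n)$. Otherwise both colors appear, and I let $m_2$ denote the smallest color-$2$ element and $M_1$ the largest color-$1$ element, with $m_2 \ne M_1$. When $M_1 < m_2$, minimality of $m_2$ and maximality of $M_1$ force $[1,m_2-1]$ to be entirely color $1$ and $[m_2,n]$ entirely color $2$; the within-block constraint then splits every block into one of these intervals while the between-block constraint is vacuous, contributing $\sum_{m_2=2}^{n} B(m_2-1)\,B(n-m_2+1)$. When $M_1 > m_2$, the between-block constraint forces $m_2$, $M_1$, and every element of $(m_2,M_1)$ (regardless of its color) into a common block $B_\ast$; the within-block constraint conversely excludes anything outside $[m_2,M_1]$ from $B_\ast$, so $B_\ast = [m_2,M_1]$ with coloring ``$2$'s then $1$'s'' (giving $M_1-m_2$ threshold choices), while $[1,m_2-1]$ is forced color $1$ and partitioned freely, and $[M_1+1,n]$ is forced color $2$ and partitioned freely, contributing $\sum_{1 \le m_2 < M_1 \le n}(M_1-m_2)\,B(m_2-1)\,B(n-M_1)$.

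Adding the three contributions yields a raw double-sum formula equivalent to the stated one. Matching it to the theorem's specific closed form is a purely algebraic reindexing: the $m_2=n$ term of the first sum gives the $B(n-1)$ piece; reindexing the $m_2=1$ term of the second sum yields the double-sum $\sum_{j=2}^{n}\sum_{k=0}^{n-j}B(n-j-k)$; and for intermediate $m_2 \in \{2,\ldots,n-1\}$ the combined contribution is rewritten using the Bell-number identity $B(m+1) = \sum_{k=0}^{m}\binom{m}{k}B(k)$, which is exactly what introduces the $\binom{n-j}{k}$ factor in the last summand.

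I expect the subtlest step to be the pattern translation, because it is easy to conflate ``blocks of $\sigma$ canonically ordered'' with ``blocks of the chosen sub-partition canonically ordered'' when interpreting $1^22^1$ containment. Once that point is settled, the structural analysis of each sub-case is routine, and the remainder is bookkeeping together with one application of the Bell identity.
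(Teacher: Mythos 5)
Your proof is correct, and I verified that your raw formula $2B(n)+\sum_{m_2=2}^{n}B(m_2-1)B(n-m_2+1)+\sum_{1\le m_2<M_1\le n}(M_1-m_2)B(m_2-1)B(n-M_1)$ agrees with the stated one (e.g., both give $18$ and $56$ for $n=3,4$), with the reindexings and the Bell identity $B(m+1)=\sum_{k=0}^{m}\binom{m}{k}B(k)$ doing exactly the reconciliation you describe. Your route differs from the paper's in how the non-monochromatic partitions are sliced. The paper conditions on the color of element $1$: if it is colored $2$ it parametrizes by the first color-$1$ element $j$; if it is colored $1$ it splits on whether the first color-$2$ element is $n$ or some $j<n$, and in the latter case sub-conditions on the number $k$ of larger elements sharing $j$'s block and on whether any of them is colored $1$ ($k$ ways, forcing them to be $j+1,\dots,j+k$) or none is ($\binom{n-j}{k}$ ways) --- which is how the factor $k+\binom{n-j}{k}$ arises directly. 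You instead parametrize by the pair $(m_2,M_1)$ and split on its order, which unifies the paper's second and fourth terms into one symmetric double sum and makes the forced mixed block $B_\ast=[m_2,M_1]$ transparent; the cost is that the paper's case ``$j$'s block-mates all colored $2$'' migrates into your $M_1<m_2$ case, so you must invoke the Bell recurrence to recover the printed formula, a step the paper avoids by building $\binom{n-j}{k}$ into the count from the start. Your opening observation about canonization of the chosen sub-partition (so that $1^22^1$-avoidance is governed by value order, not by the block order of $\sigma$) is exactly the right reading and matches the paper's implicit use of it.
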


\begin{proof}  Suppose that $\sigma\in \Pi_n\wr C_2(1^11^2,1^22^1)$.  The case where all elements of $\sigma$ have the same color is covered by the first term above.  

Now, suppose that each color appears at least once.  Let $j$ be the first element colored 1 and suppose that $j\geq2$.  Then every element in $[j-1]$ must be colored 2 and must appear in the same block as $j$.  We now choose $k$ elements from the remaining $n-j$ elements to appear in the block with $j$.  Each of these $k$ elements must be colored 1 to avoid a copy of $1^11^2$.  Also, each remaining element not in the block with $j$ must be colored 2 to avoid a copy of $1^22^1$.  Now, to avoid a copy of $1^22^1$ it must be precisely the elements $[j+k]$ in the first block.  This gives us the second term above.

The final case is the case where the first element is colored 1 and both colors appear at least once.  If $n$ is the first element colored 2 then the first $n-1$ elements must all be colored 1 and may be partitioned in any way.  Note that $n$ may not appear in a block with any of the first $n-1$ elements otherwise we would have a copy of $1^11^2$.  This gives the third term above.

Suppose, now that $j\not=n$ is the first element colored 2.  As above, the elements less than $j$ must all be colored $1$ and $j$ may not appear in a block with any of the elements preceding $j$.  If an element following $j$ appeared in a block with an element preceding $j$ then that element is colored 1 or 2.  If it is colored 1, it creates a copy of $1^22^1$ with $j$.  If it is colored 2 then it creates a copy of $1^11^2$ with an element appearing before $j$.  Thus, none of the elements following $j$ may appear in a block with an element preceding $j$.  We may partition these $j-1$ elements any way we like.

We now turn to the $n-j$ elements following $j$.  None of these elements could appear in the block with $j$ in which case there are $B(n-j)$ ways to partition them, and they must all be colored 2 to avoid $1^22^1$. 

Suppose there are $k\geq1$ of the elements $\{j+1,\dots,n\}$ in the same block as $j$.  If one of them is colored 1 then to avoid $1^22^1$ these $k$ elements must be $j+1,j+2,\dots,j+k$.  Now, there are $k$ ways to pick the smallest of the elements $j+1,\dots,j+k$ to be colored 1, and at this point the colors on these elements are determined.  If none of these $k$ elements is colored 1 then we may choose them in $\binom{n-j}{k}$ ways.  Once we have added these $k$ elements to the block containing $j$ we partition the remaining elements in $B(n-j-k)$ ways and color them 2.  This gives us the last term above.
\end{proof}

\subsection{Class 7}

\begin{thm}  For $n\geq1$, we have $$\left|\Pi_n\wr C_2(1^11^2,1^12^2)\right|=(n+1)B(n).$$
\label{T:class37}
\end{thm}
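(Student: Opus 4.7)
The plan is to characterize the allowable colorings very rigidly, and then observe that the underlying uncolored partition is completely unconstrained, so the count factors as (number of colorings) $\times$ (number of partitions).

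First, I would analyze each forbidden pattern separately. Avoiding $1^11^2$ means that within any single block, written in increasing order of elements, we never have a color-$1$ element followed by a color-$2$ element; equivalently, in each block the color-$2$ elements all precede (in element value) the color-$1$ elements. Avoiding $1^12^2$ means that whenever $a<b$ lie in \emph{different} blocks, we cannot have $a$ colored $1$ and $b$ colored $2$; equivalently, if any element is colored $1$, then every strictly larger element in a different block is also colored $1$.

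Next, I would combine the two conditions to pin down the global color pattern. Let $m$ be the smallest color-$1$ element of $\sigma$ (if one exists). By the within-block condition applied to the block of $m$, every element of that block exceeding $m$ is also colored $1$. By the between-block condition, every element exceeding $m$ in a different block is colored $1$. Hence every element in $\{m,m+1,\ldots,n\}$ is colored $1$, and every element in $\{1,\ldots,m-1\}$ is colored $2$. Thus the color-$1$ set is always a (possibly empty) suffix $\{m,m+1,\ldots,n\}$ of $[n]$, giving exactly $n+1$ possible colorings (one for each $m\in\{1,2,\ldots,n,n+1\}$, where $m=n+1$ means no element is colored $1$).

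Then I would check the converse: for any partition $\sigma\in\Pi_n$ and any suffix coloring of this form, both forbidden patterns are automatically avoided. Within each block the color-$2$ elements (those $<m$) precede the color-$1$ elements (those $\geq m$), so there is no $1^11^2$; and any color-$1$ element is $\geq m$, so every strictly larger element is also $\geq m$ and hence color-$1$, ruling out $1^12^2$. Therefore the valid objects biject with pairs (partition of $[n]$, choice of threshold $m\in\{1,\ldots,n+1\}$), giving $(n+1)B(n)$.

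The only subtle point — and really the ``main obstacle,'' though it is mild — is ensuring that the deduction of the suffix structure uses both forbidden patterns correctly: the within-block condition alone permits blocks with mixed color patterns across blocks, and the between-block condition alone permits a color-$1$ element followed by a color-$2$ element in the same block. It is the combination that forces the smallest color-$1$ element to trigger color $1$ for all larger elements everywhere.
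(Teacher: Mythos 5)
Your proposal is correct and takes essentially the same approach as the paper's enumerative proof: identify the smallest element colored 1, deduce that all larger elements are colored 1 and all smaller ones colored 2, and count $(n+1)$ threshold choices times $B(n)$ unconstrained partitions. Your explicit verification of the converse is a welcome addition the paper leaves implicit.
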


\begin{proof}  Suppose $\sigma\in\Pi_n\wr C_2(1^11^2,1^12^2)$, and suppose $i$ is the smallest element colored 1.  Every element larger than $i$ must be colored 1 as well.  By assumption everything preceding $i$ must be colored 2.  There are no restrictions on how these elements may be partitioned, so we partition them in $B(n)$ ways.  We can choose the $i$ above in $n+1$ ways, where $i=n+1$ corresponds to the whole partition colored 2.
\end{proof}

The above proof is purely enumerative, but we also provide a bijective proof.  It is known via OEIS entry \seqnum{A052889} that $(n+1)B(n)$ counts the number of elements in $\mathcal{S}_{n+2}(12\!-\!3)[12]$, where $\mathcal{S}_{n+2}(12\!-\!3)[12]$ is the set of permutations avoiding the pattern $12\!-\!3$ and beginning with a copy of $12$.  Notice that these permutation must have $n+2$ as their second element.  The first element may be anything.  The remaining $n$ elements must form a $12\!-\!3$ avoiding permutation.  

\begin{proof}[Alternate Proof of Theorem \ref{T:class37}]
We make use of a bijection of Claesson \cite{C01} that puts $\mathcal{S}_n(1\!-\!23)$ in bijection with $\Pi_n$.  Let $\sigma=B_1/B_2/\dots/B_k\in\Pi_n$.  Order the blocks so that $\min B_1>\min B_2>\cdots> \min B_k$.  Order the elements within the blocks so that the first element in each block is the minimum element in that block and the remaining elements are in decreasing order.  Claesson shows that the map $\tau(\sigma)=B_1B_2\dots B_k$, where the blocks are concatenated is a bijection between $\Pi_n$ and $\mathcal{S}_n(1\!-\!23)$.  

Let $f:\Pi_n\wr C_2(1^11^2,1^12^2)\rightarrow \mathcal{S}_{n+2}(12\!-\!3)[12]$ by $f(\sigma)=q$, where the first element of $q$ is the first element colored 1 in $\sigma$ and the second element of $q$ is $n+2$.  If no element in $\sigma$ is colored $1$ then the first element of $q$ is $n+1$.  Now, we replace the element $i$ in its block by $n+1$, and call this new partition $\hat{\sigma}$.   Then $q=i(n+2)((\tau(\widehat{\sigma}))^c)^r$, where $r$ is the reversal map, $c$ is the complement map and $\tau$ is the map described above.  

The permutation $q$ obtained clearly begins with a copy of $12$.  No copy of $12\!-\!3$ can be formed using the first two elements of $q$ and $((\tau(\widehat{\sigma}))^c)^r$ must also avoid $12\!-\!3$ by above.  Thus, the map is well defined and is a bijection.
\end{proof}

\subsection{Class 8}

\begin{thm}  For $n\geq1$, we have $$\left|\Pi_n\wr C_2(1^11^2,1^21^1)\right|=\sum_{k=0}^n 2^k \begin{Bmatrix}
n\\
k
\end{Bmatrix},$$ where $\begin{Bmatrix}
n\\
k
\end{Bmatrix}$ denotes a Stirling number of the second kind.
\end{thm}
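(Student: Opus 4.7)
The plan is to first translate the double avoidance condition into a clean structural characterization of $\sigma \in \Pi_n \wr C_2(1^11^2, 1^21^1)$, and then perform a direct count using that characterization. Both forbidden patterns $1^11^2$ and $1^21^1$ have underlying uncolored partition $11$, i.e.\ a single block of size $2$, so any occurrence of either pattern in $\sigma$ consists of two elements lying in the same block. In the pattern sense, avoiding $1^11^2$ forbids such a pair where the smaller-indexed element has a strictly smaller color than the larger-indexed element, while avoiding $1^21^1$ forbids the reverse. With only two colors available, these two cases exhaust every way two elements of a common block could be assigned distinct colors.

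The structural conclusion I would extract is that \emph{every block of $\sigma$ must be monochromatic}. (The patterns $1^11^1$ and its reduction-equivalent $1^21^2$ are not forbidden, so a block is allowed to contain any number of elements, provided they all share one color.) Once this is established, the enumeration becomes a clean product. I would first choose the underlying uncolored partition of $[n]$ into exactly $k$ blocks, which can be done in $\begin{Bmatrix} n \\ k \end{Bmatrix}$ ways, and then independently assign one of the two colors to each of the $k$ blocks, contributing a factor of $2^k$. Summing over all possible block counts $k = 0, 1, \dots, n$ yields
\[
\left|\Pi_n \wr C_2(1^11^2, 1^21^1)\right| \;=\; \sum_{k=0}^n 2^k \begin{Bmatrix} n \\ k \end{Bmatrix},
\]
as claimed.

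The only step that requires any genuine argument is the structural characterization, and even there the work amounts to a careful unpacking of what "pattern sense" means for two-element colored patterns together with the observation that, among color sequences on two elements of one block, the constant sequences are not forbidden (they are absorbed by the reduction equivalence of $a^2b^2$ with $a^1b^1$ noted before Lemma~\ref{L:lemma}) while the two nonconstant sequences are precisely the forbidden patterns. I do not anticipate any real obstacle; the remainder is a textbook application of the set-partition interpretation of the Stirling numbers of the second kind.
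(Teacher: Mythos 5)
Your proposal is correct and follows essentially the same approach as the paper: both arguments observe that avoiding $1^11^2$ and $1^21^1$ forces every block to be monochromatic with no further restrictions, and then count by choosing a partition into $k$ blocks in $\begin{Bmatrix} n\\ k\end{Bmatrix}$ ways and coloring each block in $2^k$ ways. No gaps.
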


\begin{proof}
If a colored partition avoids $1^11^2$ and $1^21^1$, then all blocks must be monochromatic.  There are no further restrictions.  We may partition $[n]$ into $k$ blocks in $\begin{Bmatrix}
n\\
k
\end{Bmatrix}$ ways, and then color each block monochromatically in $2^k$ ways.  Thus this avoidance sequence is given by $\sum_{k=0}^n 2^k \begin{Bmatrix}
n\\
k
\end{Bmatrix}$.
\end{proof}

\section{Avoiding Three Patterns}\label{S:three}

There are six different Wilf classes consisting of three patterns as seen in Table \ref{T:threedata}.  Again, we consider each class in turn.  Wilf equivalences not addressed by Lemma \ref{L:lemma} are explained in the appropriate subsection below.

\begin{center}
\begin{table}
\begin{center}
\begin{tabular}{|c|c|c|c|}
\hline
Class&Patterns &Sequence&OEIS number\\ \hline
1&$\{1^11^1,1^11^2,1^12^1\}\equiv\{1^11^1,1^21^1,1^12^1\}$&$2,3,0,0,0,0$&Trivial\\
&$\{1^11^1,1^12^1,1^12^2\}\equiv\{1^11^1,1^12^1,1^22^1\}$&&\\
\hline
2&$\{1^11^1,1^12^2,1^22^1\}$&$2,4,2,2,2,\dots$&Trivial\\ \hline
3&$\{1^11^1,1^11^2,1^22^1\}\equiv\{1^11^1,1^21^1,1^12^2\}$&$2,4,6,8,10,12$&\seqnum{A005843}\\
&$\{1^11^2,1^12^1,1^12^2\}\equiv\{1^21^1,1^12^1,1^22^1\}$&&\\
&$\{1^11^2,1^12^1,1^22^1\}\equiv\{1^21^1,1^12^1,1^12^2\}$&&\\\hline
4&$\{1^11^1,1^11^2,1^21^1\}$&$2,4,8,16,32,64$&\seqnum{A000079}\\
&$\{1^11^2,1^21^1,1^12^1\}$&&\\
&$\{1^12^1,1^12^2,1^22^1\}$&&\\ \hline
5&$\{1^11^1,1^11^2,1^12^2\}\equiv\{1^11^1,1^21^1,1^22^1\}$&$2,4,8,17,38,90$&\seqnum{A081124}\\
\hline
6&$\{1^11^2,1^12^2,1^22^1\}\equiv\{1^21^1,1^12^2,1^22^1\}$&$2,5,12,33,108$&\seqnum{A209798}\\\hline
7&$\{1^11^2,1^21^1,1^12^2\}\equiv\{1^11^2,1^21^1,1^22^1\}$&$2,5,14,44,154$&\seqnum{A014322}\\
\hline
\end{tabular}
\end{center}
\caption{Enumeration data for 2-colored partitions avoiding three partition patterns}
\label{T:threedata}
\end{table}
\end{center}

\subsection{Class 1}

\begin{thm}
$\left|\Pi_n \wr C_2(1^11^1,1^11^2,1^12^1)\right|=\left|\Pi_n \wr C_2(1^11^1,1^12^1,1^12^2)\right|=\begin{cases}
2&\text{if } n=1,\\
3&\text{if } n=2,\\
0&otherwise.\\
\end{cases}$
\end{thm}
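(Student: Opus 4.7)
My plan is to enumerate each of the two sets directly by exploiting the strong structural constraints imposed by forbidding $1^11^1$ together with the two cross-block patterns.  In both cases the first observation is that avoiding $1^11^1$ forces every block to have size at most $2$ (since only two colors are available), and that within a size-$2$ block the two elements must carry different colors.

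For $\{1^11^1,1^11^2,1^12^1\}$, the extra prohibitions give (i) in a size-$2$ block $\{a,b\}$ with $a<b$ the avoidance of $1^11^2$ forces $c_a=2$ and $c_b=1$, and (ii) any two elements in distinct blocks must have different colors, since two matching colors (both $1$ or both $2$) reduce to the pattern $1^12^1$.  I would verify by inspection that the $n=1$ partitions $1^1$ and $1^2$ are allowed, and that for $n=2$ exactly $1^21^1$ (one block), $1^12^2$, and $1^22^1$ (both two blocks) are allowed, giving the counts $2$ and $3$.  For $n\geq 3$ I would fix the three elements $1,2,3$ and run through how the pairs $\{1,2\}$, $\{1,3\}$, $\{2,3\}$ can be distributed between same-block and different-block configurations (subject to the transitivity of the same-block relation); in each of the resulting cases the constraints (i) and (ii) force some element to be assigned both colors $1$ and $2$.

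For $\{1^11^1,1^12^1,1^12^2\}$, a size-$2$ block need only have distinct colors (in either order), while for any two elements $a<b$ in distinct blocks each of the choices $(c_a,c_b)\in\{(1,1),(1,2),(2,2)\}$ is forbidden---the first two matching $1^12^1$ and $1^12^2$ directly, and the third reducing to $1^12^1$---so $(c_a,c_b)=(2,1)$ is forced.  Direct enumeration again yields $1^1$ and $1^2$ for $n=1$ and the three partitions $1^11^2$, $1^21^1$, $1^22^1$ for $n=2$.  The three-element case check for $n\geq 3$ is analogous: the pinning of cross-block colors produces the same contradictions.

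The only real obstacle is the three-element case analysis, which is short and essentially mechanical.  A unifying observation that streamlines both arguments is that in either set any cross-block pair $a<b$ pins $c_a$ to $2$ and $c_b$ to $1$, so any ``middle'' element simultaneously belonging to two cross-block pairs (once as the larger and once as the smaller element) becomes over-constrained, which covers all configurations of three or more elements.
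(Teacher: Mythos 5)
Your main line of argument is sound and reaches the right counts, but it is more laborious than the paper's. The paper observes that both pattern sets contain $1^11^1$ and $1^12^1$, and that avoiding these two patterns simultaneously forces any two elements---whether in the same block or in different blocks---to receive different colors (equal colors reduce to the color word $11$ in either configuration); with only two colors this gives $n\le 2$ immediately, and all that remains is the inspection of $n=1,2$ that you also carry out. Your three-element case analysis for $n\ge 3$ arrives at the same conclusion but splits into the block configurations of $\{1,2,3\}$; it does work, though in the all-singletons case of the first set the contradiction is a pigeonhole failure (a triangle cannot be properly $2$-colored) rather than ``some element assigned both colors,'' so your summary of the cases needs adjusting. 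More importantly, the ``unifying observation'' you offer at the end is false for the first set $\{1^11^1,1^11^2,1^12^1\}$: a cross-block pair $a<b$ there is only required to satisfy $c_a\ne c_b$ (that set contains neither $1^12^2$ nor $1^22^1$), not $c_a=2$ and $c_b=1$---indeed your own $n=2$ list for that set includes $1^12^2$, which contradicts the claimed pinning. Since that observation is offered only as an optional streamlining and your case analysis does not rely on it, the proof stands, but the correct unifying fact is the paper's: under $\{1^11^1,1^12^1\}$ every pair of elements must be bichromatic, hence $n\le 2$.
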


\begin{proof}
Note that if a partition avoids $1^11^1$ and $1^12^1$, then no two elements may have the same color.  Since we only have 2 colors available, such a partition must have at most 2 elements.

If the partition also avoids $1^11^2$, then if two elements are in the same block, the smaller must have color 2 and the larger must have color 1.  The only partitions that obey this are $1^1$ and $1^2$ in $\Pi_1\wr C_2$, and $1^21^1$, $1^12^2$, and $1^22^1$ in $\Pi_2\wr C_2$.

If the partition also avoids $1^12^2$ then if two elements are in different blocks, the larger element must have color 1 and the smaller element must have color 2.  The only partitions that obey this are $1^1$ and $1^2$ in $\Pi_1\wr C_2$, and $1^11^2$, $1^21^1$, and $1^22^1$ in $\Pi_2\wr C_2$.
\end{proof}

\subsection{Class 2}

\begin{thm}
$\left|\Pi_n\wr C_2(1^11^1,1^12^2,1^22^1)\right|=\begin{cases}
4&\text{if }n=2,\\
2&otherwise.\\
\end{cases}$
\end{thm}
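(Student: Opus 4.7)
The plan is to extract strong structural constraints from the three forbidden patterns and then do a short case analysis. First, avoiding $1^11^1$ forces every block to have at most two elements, and any block of size two must use one element of each color. Next, I would treat the constraints from $1^12^2$ and $1^22^1$ together: in the pattern sense, the pair $\{1^12^2,1^22^1\}$ forbids any two elements in \emph{different} blocks from having different colors, since any such pair reduces (via the only two possible color relations in $C_2$) to one of these patterns. So if $\sigma$ has more than one block, then all elements in different blocks must share a color — and by transitivity across any two blocks, every element of $\sigma$ has the same color.

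With these two observations in hand, I would split into cases on the number of blocks. If $\sigma$ has a single block, then the $1^11^1$ restriction forces all elements to have distinct colors, so $n\le 2$; this gives exactly $2$ partitions when $n=1$ (namely $1^1$ and $1^2$) and exactly $2$ when $n=2$ (namely $1^11^2$ and $1^21^1$). If $\sigma$ has at least two blocks, I would first rule out any block of size two: such a block contains two elements of different colors, so any element in another block would have to share a color with \emph{both} of them, which is impossible. Hence every block is a singleton, and by the previous paragraph all singletons share a common color, giving exactly $2$ partitions for each $n\ge 2$.

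Adding the two cases together yields $2+2=4$ partitions when $n=2$ and $2$ partitions in every other case (for $n=1$ only the single-block case contributes, and for $n\ge 3$ only the all-singleton monochromatic case contributes). The main obstacle is essentially notational rather than conceptual: one must be careful in reading the pattern-sense definition so as to correctly deduce that $\{1^12^2,1^22^1\}$ jointly forces monochromaticity across distinct blocks, and then to verify that this is incompatible with any two-element block once a second block exists. Once that is clearly stated, the count is immediate from the case analysis, with no calculation beyond adding small integers.
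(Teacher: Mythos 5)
Your proof is correct and follows essentially the same approach as the paper: both arguments rest on the observations that $1^11^1$ restricts block contents, that avoiding $\{1^12^2,1^22^1\}$ forces elements in different blocks to share a color, and that a two-element (hence bicolored) block is incompatible with the existence of a second block. The only difference is presentational — the paper verifies $n=1,2$ by direct inspection and argues structurally only for $n\geq 3$, whereas you derive all cases uniformly from the case split on the number of blocks.
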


\begin{proof}
Consider a partition in $\Pi_n\wr C_2(1^11^1,1^12^2,1^22^1)$.  One can check the cases where $n=1$ and $n=2$ and readily see that the number of partitions matches the given formula.  If $n\geq 3$, then the only partitions in $\Pi_n\wr C_2(1^11^1,1^12^2,1^22^1)$ are the monochromatic partitions with every element in its own block.  To see this, notice that if more than one element is in a block then the block must have two colors to avoid $1^11^1$.  This block along with an element in any other block will form a copy of $1^12^2$ or $1^22^1$.  If every element is in the same block, we must have a copy of $1^11^1$ since $n\geq3$.  Thus, $\left|\Pi_n\wr C_2(1^11^1,1^12^2,1^22^1)\right|=2$ if $n\not=2$ and $\left|\Pi_2\wr C_2(1^11^1,1^12^2,1^22^1)\right|=4$.
\end{proof}

\subsection{Class 3}

\begin{thm} For $n\geq 1$,
$$\left|\Pi_n\wr C_2(1^11^1,1^11^2,1^22^1)\right|=\left|\Pi_n\wr C_2(1^11^2,1^12^1,1^12^2)\right|=\left|\Pi_n\wr C_2(1^11^2,1^12^1,1^22^1)\right|=2n.$$
\end{thm}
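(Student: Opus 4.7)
The plan is to prove the three cardinalities directly and separately, exploiting the fact that Lemma~\ref{L:lemma} already collapses the six pattern sets listed in Class 3 of Table~\ref{T:threedata} to one representative per row. Let $S_a=\{1^11^1,1^11^2,1^22^1\}$, $S_b=\{1^11^2,1^12^1,1^12^2\}$, and $S_c=\{1^11^2,1^12^1,1^22^1\}$. In each case I would analyze the structural restrictions forced by the three patterns and then count; in all three cases the answer should break as $(n+1)+(n-1)=2n$.

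For $S_a$ I would first show that avoiding $1^11^1$ and $1^11^2$ together forces every block to have size at most two, and that any block of size two $\{i,j\}$ with $i<j$ must have $i$ colored $2$ and $j$ colored $1$ (a block of size three would contain an $i<j$ pair where $j$ is simultaneously forced to be colored $1$ and colored $2$). Then $1^22^1$-avoidance across blocks would rule out having two such size-two blocks and would force any single size-two block to consist of consecutive integers $\{i,i+1\}$, with all singletons below $i$ colored $1$ and all singletons above $i+1$ colored $2$. This yields $n-1$ configurations with a size-two block plus $n+1$ all-singleton configurations (the latter colored by a word of the form $1^a2^{n-a}$ for $0\le a\le n$).

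For $S_b$ I would track the smallest element $i$ colored $1$ (setting $i=n+1$ if no element is colored $1$). Avoidance of $1^12^1$ and $1^12^2$ forces every element greater than $i$ to lie in the same block $B$ as $i$, and avoidance of $1^11^2$ inside $B$ forces every such element to be colored $1$. Every element less than $i$ is colored $2$, and since two elements colored $2$ in different blocks reduce to $1^12^1$, the elements less than $i$ either all lie inside $B$ or all lie together in a single other block. Counting the first option across choices of $i$ gives $n+1$, and the second gives $n-1$.

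For $S_c$ I would observe that if two elements lie in different blocks, the only coloring not reducing to a forbidden pattern is smaller-colored-$1$, larger-colored-$2$: the options $(1,1)$ and $(2,2)$ reduce to $1^12^1$, while $(2,1)$ is exactly $1^22^1$. This immediately forces at most two blocks, and when there are two they must be $\{1,\ldots,m\}$ (colored entirely $1$) and $\{m+1,\ldots,n\}$ (colored entirely $2$), contributing $n-1$ configurations. In the one-block case, $1^11^2$-avoidance inside the block forces the color word to be $2^a1^{n-a}$, contributing $n+1$. The main obstacle across all three arguments will be color-reduction bookkeeping---in particular, remembering that two equally colored elements in different blocks must be treated as a $1^12^1$ pattern, and, in $S_b$, that the within-block constraints from $1^11^2$ interact subtly with the across-block constraints. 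Once that is in hand, the casework is routine.
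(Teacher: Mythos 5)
Your proposal is correct: all three structural characterizations check out, and each count decomposes as $(n+1)+(n-1)=2n$ exactly as you claim. The route differs from the paper's in its final assembly. The paper only enumerates the first set $\Pi_n\wr C_2(1^11^1,1^11^2,1^22^1)$ directly (via the same structure you describe: at most one doubleton, which must be $\{i,i+1\}$ colored $2,1$, flanked by monochromatic singletons); it then derives the other two counts by exhibiting an explicit bijection onto $\Pi_n\wr C_2(1^11^2,1^12^1,1^12^2)$ (collapsing or splitting blocks and swapping colors) and a further color-swap bijection to a set that Lemma~\ref{L:lemma} identifies with the third class. You instead carry out three independent direct enumerations. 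Your structural analysis of the second set (at most two blocks; either one block with color word $2^a1^{n-a}$ or the split $\{1,\dots,i-1\}^2/\{i,\dots,n\}^1$) reproduces exactly what the paper establishes as the domain description for its bijection, and your analysis of the third set is new relative to the paper, which never examines that class directly. What the paper's approach buys is an explicit combinatorial explanation of the Wilf equivalences in Class 3 of Table~\ref{T:threedata}, consistent with its stated preference for bijective proofs; what yours buys is self-containedness and symmetry among the three cases, at the cost of repeating similar casework three times. One small caution if you write this up: in the $S_b$ case, be explicit that when $i=1$ or $i=n+1$ the ``two-block'' option is vacuous, so the $n+1$ and $n-1$ subcounts do not overlap --- your tally already handles this correctly, but it is the one place where double-counting could silently creep in.
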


\begin{proof}

Suppose that $\sigma\in \Pi_n\wr C_2(1^11^1,1^11^2,1^22^1)$.  Since $\sigma$ avoids $1^11^1$ it may not have blocks with more that two elements in them.  Suppose that $i$ is the first element colored 2.  The elements $[i-1]$ must all be colored 1, and hence must be in their own blocks.  If one of the remaining $n-i$ elements is colored 1 then it must be in the same block as $i$ and hence must be $i+1$ otherwise a copy of $1^22^1$ is produced.  We have $n$ choices for the first 2 colored element $i$.  In the case when $1\leq i\leq n-1$ we have two choices for $i+1$, either $i+1$ is colored 1 and in the same block as $i$ or $i+1$ is colored 2 and in a different block than $i$.  The rest of the partition and colors are determined.  If $i=n$ we do not have an element $i+1$, but we do need to consider the partition where every element is in its own block and every element is colored 1.  So we will say that the two choices in the case when $i=n$ are for $i$ to have color 1 or 2.  Thus, $|\Pi_n\wr C_2(1^11^1,1^11^2,1^22^1)|=2n$.

We will describe a bijection with the partitions in the set $\Pi_n\wr C_2(1^11^2,1^12^1,1^12^2)$.  The partitions in $\Pi_n\wr C_2(1^11^2,1^12^1,1^12^2)$ may have no more than two blocks.  In the case where there are two blocks every element in one block must be colored 1, and every element in the other block must be colored two, otherwise a copy of $1^12^1$ occurs.  Furthermore, the block containing all of the 2-colored elements must contain the elements $1,2,\dots, i$ for some $i\geq 1$ and the remaining elements are in the 1-colored block.  If we have exactly one block then we may use both colors in this block, but the first $i$ elements must be colored 2 and the remaining $n-i$ elements are colored 1. 

The bijection between $\Pi_n\wr C_2(1^11^1,1^11^2,1^22^1)$ and $\Pi_n\wr C_2(1^11^2,1^12^1,1^12^2)$ will work as follows.  The monochromatic partitions of $[n]$ with $n$ blocks in $\Pi_n\wr C_2(1^11^1,1^11^2,1^22^1)$ are mapped to the monochromatic partions of $[n]$ with one block in $\Pi_n\wr C_2(1^11^2,1^12^1,1^12^2)$.  If $i$ is the first element colored 2, we look at the color of $i+1$.  If $i+1$ is colored 1, we put all of the elements in a single block, and then color everything 2 that was colored 1 and vice versa.  If $i+1$ is colored 2 then we put all of the 1 colored elements in a single block and all of the 2 colored elements in a single block and swap the colors.  The bijection between the sets $\Pi_n\wr C_2(1^11^2,1^12^1,1^12^2)$ and $\Pi_n\wr C_2(1^21^1,1^12^1,1^12^2)$ consists of swapping the colors inside of blocks with more than two colors.  
\end{proof}

\subsection{Class 4}

\begin{thm} For $n\geq 1$,
$$\left|\Pi_n \wr C_2 (1^11^1, 1^11^2, 1^21^1)\right|=\left|\Pi_n \wr C_2 (1^11^2, 1^21^1, 1^12^1)\right|=\left|\Pi_n \wr C_2 (1^12^1, 1^12^2, 1^22^1)\right|=2^n.$$
\end{thm}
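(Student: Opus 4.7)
The plan is to analyze each of the three pattern sets separately, since Lemma \ref{L:lemma} does not identify them, and to show that in each case the avoidance conditions are rigid enough that the count collapses to $2^n$.

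For $\{1^11^1, 1^11^2, 1^21^1\}$ I would argue that avoiding $1^11^1$ forces any block of size $\geq 2$ to contain two differently colored elements, while avoiding $1^11^2$ and $1^21^1$ forbids precisely that configuration in either order. Hence every block is a singleton, the underlying partition is uniquely $\{1\}/\{2\}/\cdots/\{n\}$, and the two color choices on each singleton yield exactly $2^n$ partitions.

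For $\{1^11^2, 1^21^1, 1^12^1\}$ the first two patterns again force every block to be monochromatic. The key step is the order-isomorphism observation: because the color word of $1^12^1$ is $11$, two same-colored elements in different blocks, whether both colored $1$ or both colored $2$, produce a copy of $1^12^1$. Thus each color class occupies at most one block. So either there is exactly one monochromatic block (giving $2$ partitions) or exactly two monochromatic blocks of distinct colors, in which case the data is equivalent to choosing a non-empty proper subset of $[n]$ to be the color-$1$ block, giving $2^n - 2$ partitions. The total is $2 + (2^n - 2) = 2^n$.

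For $\{1^12^1, 1^12^2, 1^22^1\}$ I would observe that the three color words $11, 12, 21$ cover three of the four possible colorings of a pair of elements in distinct blocks, and the remaining coloring $22$ reduces to $11$ and is therefore also ruled out by $1^12^1$. Hence no two elements may belong to distinct blocks, the whole partition consists of a single block, and its $2^n$ colorings are unrestricted. The main subtlety across these cases is the repeated use of the order-isomorphism comment (that $22$-colorings are already forbidden by a pattern whose color word is $11$); once that is observed, each argument is a direct structural deduction and I expect no serious obstacle.
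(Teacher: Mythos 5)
Your proposal is correct and follows essentially the same structural analysis as the paper: each pattern set forces all singleton blocks, monochromatic blocks with at most one block per color, or a single block, respectively, leaving exactly the $2^n$ colorings. Your explicit appeals to the reduction $\red(22)=11$ (so that a pattern with color word $11$ also forbids two elements colored $2$) and your $2+(2^n-2)$ bookkeeping in the second case just make explicit what the paper's proof leaves implicit.
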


\begin{proof}
Consider a partition which avoids $1^11^1$, $1^11^2$, and $1^21^1$.  In this case, no two elements can be in the same block.  Thus, we color each element either 1 or 2 and put each element in its own block.  This gives us the $2^n$ partitions avoiding those three patterns.  

For a partition to avoid $1^11^2$, $1^21^1$, and $1^12^1$, every element in a block must have the same color, and there can be at most two blocks, one colored 1 and the other colored 2.  Thus, we color each element either 1 or 2 and put the 1-colored elements in a single block and the 2-colored elements in a single block.  

For a partition to avoid $1^12^1$, $1^12^2$, and $1^22^1$, we must have every element in the same block.  Thus, we color each element either 1 or 2 and put every element in the same block. 
\end{proof}

\subsection{Class 5}

\begin{thm} For $n\geq 1$,
$\left|\Pi_n\wr C_2(1^11^1,1^11^2,1^12^2)\right|=\sum_{k=0}^n\binom{n}{k}\left\lfloor k/2 \right\rfloor!.$
\end{thm}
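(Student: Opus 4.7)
The plan is to first pin down the structural form of a partition $\sigma\in\Pi_n\wr C_2(1^11^1,1^11^2,1^12^2)$ and then count directly. Avoiding $1^11^1$ forbids two elements of the same color inside one block, and since only two colors are available every block has size at most $2$. In a size-$2$ block $\{i,j\}$ with $i<j$, the two colors must differ, and avoiding $1^11^2$ rules out placing color $1$ on $i$ and color $2$ on $j$, so the smaller element is forced to receive color $2$ and the larger color $1$. The crucial step is to combine this with the $1^12^2$ restriction: any pair $i<j$ with $i$ colored $1$ and $j$ colored $2$ cannot lie in a single block (the block structure above forces the opposite color pattern) and cannot lie in different blocks (it would be a copy of $1^12^2$). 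Hence no such pair exists, and the set of color-$2$ elements must be an initial segment $\{1,\dots,m\}$ for some $0\le m\le n$, with the color-$1$ elements filling $\{m+1,\dots,n\}$.

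With the structure in hand, the enumeration becomes a bookkeeping exercise. With $m$ fixed, every size-$2$ block pairs one element of $\{1,\dots,m\}$ with one element of $\{m+1,\dots,n\}$, and the coloring of both elements is then determined; the remaining elements are singletons. Choosing $k$ elements from each side and a matching between them gives
$$\left|\Pi_n\wr C_2(1^11^1,1^11^2,1^12^2)\right|=\sum_{m=0}^{n}\sum_{k\ge 0}\binom{m}{k}\binom{n-m}{k}k!.$$

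To rewrite this in the desired form, I would swap the order of summation and apply the Vandermonde-type identity $\sum_{m=0}^{n}\binom{m}{k}\binom{n-m}{k}=\binom{n+1}{2k+1}$, collapsing the double sum to $\sum_{k\ge 0}k!\binom{n+1}{2k+1}$. Then Pascal's rule $\binom{n+1}{2k+1}=\binom{n}{2k}+\binom{n}{2k+1}$, together with the identity $\lfloor j/2\rfloor!=k!$ valid for both $j=2k$ and $j=2k+1$, reindexes this as $\sum_{j=0}^{n}\binom{n}{j}\lfloor j/2\rfloor!$, matching the statement.

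The main obstacle is the structural analysis---specifically, recognizing that the three avoidance conditions together force the color-$2$ elements to form an initial segment of $[n]$. Once that reduction is in place, the counting step and the algebraic manipulation using Vandermonde and Pascal are routine.
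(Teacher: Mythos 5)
Your proof is correct, but it reaches the formula by a genuinely different route than the paper. The structural analysis is the same in substance (blocks of size at most $2$; in a doubleton the smaller element is colored $2$ and the larger colored $1$; the $2$-colored elements form an initial segment $\{1,\dots,m\}$), and your version of it is actually more explicit than the paper's. The counting then diverges: you condition separately on the boundary $m$ and on the number $k$ of doubletons, obtaining $\sum_{m}\sum_{k}\binom{m}{k}\binom{n-m}{k}k!$, and recover the stated form via $\sum_{m=0}^{n}\binom{m}{k}\binom{n-m}{k}=\binom{n+1}{2k+1}$ and Pascal's rule. The paper instead conditions on the number of doubletons being $\lfloor k/2\rfloor$ and chooses a single $k$-element subset of $[n]$ that encodes both the paired elements and, when $k$ is odd, a ``marker'' element recording where the color boundary sits among the singletons; this produces the summand $\binom{n}{k}\lfloor k/2\rfloor!$ directly with no algebraic manipulation, but the role of the extra chosen element is left somewhat implicit. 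Your approach buys a more transparent structural decomposition (and a double sum that is arguably the more natural answer) at the cost of a routine binomial identity; the paper's buys the closed form immediately at the cost of a more delicate encoding argument. Both are valid; your identity checks out (e.g., for $n=4$ both sums give $17$, matching the table).
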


\begin{proof}
In order to avoid the last two patterns we observe that a partition must have all of its 2-colored elements appear first followed by its 1-colored elements.  In order to avoid the first pattern we observe that no block can have more than two elements.  We will condition on the number of blocks of size 2. 

Suppose that $\sigma\in \Pi_n\wr C_2(1^11^1,1^11^2,1^12^2)$.  We have that $\sigma$ may have at most $\left\lfloor n/2\right\rfloor$ blocks of size 2.  Suppose that $\sigma$ has $\left\lfloor k/2\right\rfloor$ blocks of size 2.  We choose $k$ elements from the $n$ elements to form these blocks.  If $k$ is even then the smallest $k/2$ of these elements must be colored 2, which implies that all of the elements that are smaller than the largest of these elements must also be colored 2.  Now, the remaining elements must be colored 1.  We match each of the $k/2$ smallest elements with one of the $k/2$ largest elements in $\left\lfloor k/2\right\rfloor!$ ways.  

If $k$ is odd, then we color the partition so that the largest element colored 2 is the $(\left\lfloor k/2 \right\rfloor+1)$st smallest element of the $k$ that we have chosen.  We then match the smallest $\left\lfloor k/2 \right\rfloor$ with the largest $\left\lfloor k/2 \right\rfloor$ of these elements to form blocks of size 2.  This can be done in $\left\lfloor k/2 \right\rfloor!$ ways.  

Summing over $0\leq k\leq n$ gives us the formula above.
\end{proof}

\subsection{Class 6}

\begin{thm}
For $n\geq 1$, $|\Pi_n\wr C_2(1^11^2,1^12^2,1^22^1)|=2B(n)+n-1.$
\end{thm}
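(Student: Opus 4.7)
The plan is to split the count into two disjoint cases according to whether $\sigma$ has one block or more than one block, analyze what each forbidden pattern prohibits, and then add the two counts.

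First I would translate each pattern into a structural restriction. Avoiding $1^12^2$ says: for any two elements $i<j$ in \emph{different} blocks, $i$ colored $1$ forces $j$ to be colored $1$. Avoiding $1^22^1$ says: for any $i<j$ in different blocks, $i$ colored $2$ forces $j$ to be colored $2$. Together, these imply that any two elements lying in different blocks must carry the same color. Applying this to an arbitrary element $b$ in some block $B'$ and varying $a$ over a second block $B$, I get that every element of $B$ has the color of $b$; then varying $b$ shows every element of $\sigma$ has the same color. Thus, whenever $\sigma$ has at least two blocks, it must be monochromatic, and avoiding $1^11^2$ is automatic (since $1^11^2$ needs both colors). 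This contributes exactly $2(B(n)-1)$ partitions: two color choices times the number of uncolored partitions of $[n]$ with more than one block.

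Next I would handle the single-block case. Here the cross-block restrictions are vacuous, so only $1^11^2$ matters, and it applies within the lone block. Avoiding $1^11^2$ inside one block says there is no $i<j$ with $i$ colored $1$ and $j$ colored $2$; equivalently, all color-$2$ elements precede all color-$1$ elements in value. Such a coloring is determined by a threshold $k\in\{0,1,\dots,n\}$ where elements $1,\dots,k$ are colored $2$ and $k+1,\dots,n$ are colored $1$, yielding $n+1$ possibilities.

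Adding the two cases gives
\[
2(B(n)-1) + (n+1) = 2B(n) + n - 1,
\]
which matches the claim.

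The argument is essentially routine case analysis once the key observation is in hand: the real content is showing that the pair $\{1^12^2,1^22^1\}$ forces monochromaticity across blocks by a transitivity argument. Everything else is a straightforward enumeration, so I do not expect any serious obstacle; the only thing requiring care is the honest verification that \emph{every} pair of elements in distinct blocks must agree in color (not just some adjacent ones), which is why the quick two-step transitivity argument above is important.
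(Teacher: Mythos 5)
Your proof is correct, and it reaches the count by a slightly different decomposition than the paper's. You split by the number of blocks: with at least two blocks, the pair $\{1^12^2,1^22^1\}$ forces any two elements in distinct blocks to share a color, and your transitivity step correctly upgrades this to full monochromaticity, giving $2(B(n)-1)$; with a single block, only $1^11^2$ is relevant and the threshold colorings give $n+1$. The paper instead splits by monochromaticity: it counts all $2B(n)$ monochromatic avoiders at once, and then shows that every non-monochromatic avoider must be a single block whose coloring is determined by the smallest $1$-colored element $j\in\{2,\dots,n\}$, yielding the $n-1$ term directly. The two case analyses are just reshufflings of the same set (your two monochromatic one-block partitions move between the cases), and both rest on the same two structural facts. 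Your version has the advantage of making the role of each forbidden pattern transparent --- the cross-block patterns control the multi-block case, the within-block pattern controls the one-block case --- while the paper's version exhibits the $n-1$ exceptional partitions explicitly. Your care in verifying that \emph{every} pair of elements in distinct blocks (and then every pair whatsoever) agrees in color is exactly the point that needs checking, and you handled it properly.
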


\begin{proof}
Consider a partition in $\Pi_n\wr C_2(1^11^2,1^12^2,1^22^1)$.  Clearly, if every element in this partition has the same color then all of these patterns are avoided, so there are $2B(n)$ partitions in $\Pi_n\wr C_2(1^11^2,1^12^2,1^22^1)$ that are monochromatic.  This is the first term above.

Suppose that $j$ is the smallest element colored 1.  To avoid $1^11^2$ and $1^12^2$ every element larger than $j$ must be colored 1.  Also, every element less than $j$ must be colored 2, and to avoid $1^22^1$ every element less than $j$ must be in the same block as $j$.  Notice, that if $j=1$ then the partition is monochromatic, so we ignore this case.  The choice of the smallest element colored 1 determines the colors of the remaining elements.  Furthermore, since we are assuming $j\geq 2$, every element larger than $j$ must be in the same block as $j$, else a copy of $1^22^1$ can be found.  This means that there is exactly one nonmonochromatic partition for each smallest 1-colored element $j$. This gives the $n-1$ term.
\end{proof}

\subsection{Class 7}

\begin{thm} For $n\geq 1$, 
$|\Pi_n\wr C_2(1^11^2,1^12^2,1^21^1)|=\sum_{i=0}^nB(i)B(n-i).$
\end{thm}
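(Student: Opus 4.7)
The plan is to characterize the structure of partitions in $\Pi_n\wr C_2(1^11^2,1^12^2,1^21^1)$ completely, then read off the enumeration as a convolution of Bell numbers. The key observation is that the three forbidden patterns together impose very rigid conditions: two say something about pairs of elements in the same block, and the third says something about pairs in different blocks.

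First I would show that every block must be monochromatic. Suppose $\sigma$ avoids $1^11^2$ and $1^21^1$, and let $a<b$ lie in a common block. The pair $(a,b)$ contributes the partition pattern $11$ (two elements in one block). If $a$ has color 1 and $b$ has color 2, we get a copy of $1^11^2$; if $a$ has color 2 and $b$ has color 1, we get a copy of $1^21^1$. Hence $a$ and $b$ must share a color, and by induction the entire block is monochromatic.

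Next I would use the $1^12^2$ restriction to pin down the relationship between the blocks of different colors. Suppose $\sigma$ contains both a color-1 element and a color-2 element, and let $a$ be color 1 and $b$ be color 2 with $a$ and $b$ in different blocks. If $a<b$, the pair $(a,b)$ yields a copy of $1^12^2$, which is forbidden. Combined with the monochromatic-block observation, this forces every color-2 element to be smaller than every color-1 element. Equivalently, there is an index $i\in\{0,1,\dots,n\}$ such that $\{1,\dots,i\}$ consists precisely of the color-2 elements and $\{i+1,\dots,n\}$ consists precisely of the color-1 elements.

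Finally, since blocks are monochromatic and all color-2 elements precede all color-1 elements, no block can straddle the cut-point $i$. Therefore the partition decomposes as an arbitrary set partition of $\{1,\dots,i\}$ (all elements colored 2) together with an arbitrary set partition of $\{i+1,\dots,n\}$ (all elements colored 1), and conversely every such choice avoids the three forbidden patterns. There are $B(i)$ choices for the first piece and $B(n-i)$ for the second, and summing over $i$ yields $\sum_{i=0}^n B(i)B(n-i)$, as desired. The only step requiring care is verifying that the structural conditions are not merely necessary but also sufficient, which is a routine check: within each side the two color-equal restrictions ($1^11^2$ and $1^21^1$) are vacuous, and across the sides the larger elements all carry color 1 so no $1^12^2$ copy can arise.
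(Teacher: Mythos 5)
Your proposal is correct and follows essentially the same route as the paper's proof: monochromatic blocks forced by $1^11^2$ and $1^21^1$, all 2-colored elements preceding all 1-colored elements forced by $1^12^2$, and then a convolution of Bell numbers over the cut-point. Your write-up is somewhat more careful in verifying that the structural conditions are also sufficient, but the decomposition is identical.
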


\begin{proof}  In order to avoid $1^11^2$ and $1^21^1$ we must have that in any block every element has the same color.  Additionally to avoid $1^12^2$ we must have that all of the blocks with 2-colored elements appear before all of the blocks with 1-colored elements.  

If we choose the first $i$ elements to be 2-colored then there are $B(i)$ ways to partition them, and there are $B(n-i)$ ways to partition the remaining $n-i$ 1-colored elements.  This shows that $|\Pi_n\wr C_2(1^11^2,1^12^2,1^21^1)|=\sum_{i=0}^nB(i)B(n-i)$.
\end{proof}

\section{Avoiding 4 or More Patterns}\label{S:four}

Recall from Section \ref{S:three} that if a set $S$ contains the patterns $1^11^1$ and $1^12^1$ then the enumeration of $\Pi_n\wr C_2(S)$ is trivial.  We will avoid those cases here.  Table \ref{T:fourdata} shows Wilf classes containing 4 patterns.  

\begin{center}
\begin{table}
\begin{center}
\begin{tabular}{|c|c|c|c|}
\hline
Class&Patterns &Sequence&OEIS number\\ \hline
1&$\{1^11^1,1^11^2,1^21^1,1^12^1\}$&$2,2,0,0,0$&Trivial\\
&$\{1^11^1,1^11^2,1^12^1,1^12^2\}\equiv\{1^11^1,1^21^1,1^12^1,1^22^1\}$&&\\
&$\{1^11^1,1^11^2,1^12^1,1^22^1\}\equiv\{1^11^1,1^21^1,1^12^1,1^12^2\}$&&\\
&$\{1^11^1,1^12^1,1^12^2,1^22^1\}$&&\\ \hline
2&$\{1^11^2,1^21^1,1^12^2,1^22^1\}$&$2,2,4,10,30$& Twice \seqnum{A000110}\\
 \hline
3&$\{1^11^1,1^11^2,1^12^2,1^22^1\}\equiv\{1^11^1,1^21^1,1^22^1,1^12^2\}$&$2,3,2,2,2$&Trivial\\\hline
4&$\{1^11^1,1^11^2,1^21^1,1^12^2\}\equiv\{1^11^1,1^11^2,1^21^1,1^22^1\}$&$2,3,4,5,6$&\seqnum{A000027}\\
&$\{1^11^2,1^12^1,1^12^2,1^22^1\}\equiv\{1^21^1,1^12^1,1^12^2,1^22^1\}$&&\\
&$\{1^11^2,1^21^1,1^12^1,1^12^2\}\equiv\{1^11^2,1^21^1,1^12^1,1^22^1\}$&&\\\hline
\end{tabular}
\end{center}
\caption{Enumeration data for 2-colored partitions avoiding four partition patterns}
\label{T:fourdata}
\end{table}
\end{center}

Since Classes 1 and 3 are again trivial, we only provide explanation for Classes 2 and 4 below.  

\subsection{Class 2}

\begin{thm}  For $n\geq 1$, 
$\left|\Pi_n \wr C_2(1^11^2,1^21^1,1^12^2,1^22^1)\right|=2B(n).$
\end{thm}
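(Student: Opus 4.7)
The plan is to observe that the four forbidden patterns together force the partition to be entirely monochromatic, after which the count is immediate.

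First I would analyze the two intra-block patterns. The patterns $1^11^2$ and $1^21^1$ each consist of two elements in the same block with two different colors. So avoiding both of them means that within any single block, all elements must receive the same color; in other words, every block is monochromatic.

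Next I would analyze the two inter-block patterns. The patterns $1^12^2$ and $1^22^1$ each consist of two elements in different blocks with two different colors (the two orderings $(1,2)$ and $(2,1)$ of the colors, respectively). So avoiding both of them means that whenever two elements lie in different blocks, they must share a color.

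Combining the two observations, any two elements of $\sigma$ must have the same color (either because they are in the same block and that block is monochromatic, or because they are in different blocks and therefore must share a color). Hence $\sigma$ is a monochromatic partition of $[n]$. Conversely, any monochromatic partition trivially avoids all four patterns, since every such pattern requires two distinct colors. There are $B(n)$ partitions of $[n]$ and $2$ choices of color, giving exactly $2B(n)$ colored partitions. There is no real obstacle here — the only care needed is to verify the straightforward but crucial fact that all four patterns require two distinct colors, so that the monochromatic partitions automatically avoid them.
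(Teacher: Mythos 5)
Your proposal is correct and follows essentially the same approach as the paper: the two intra-block patterns force each block to be monochromatic, the two inter-block patterns force elements in different blocks to share a color, and together these force the entire partition to be monochromatic, giving $2B(n)$. Your explicit check of the converse (that monochromatic partitions avoid all four patterns) is a small welcome addition, but the argument is the same.
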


\begin{proof}
Avoiding $1^11^2$ and $1^21^1$ means that any two elements in the same block must have the same color.  Avoiding $1^12^2$ and $1^22^1$ means that any two elements in different blocks must have the same color.  Thus, these permutations must be monochromatic.  Since there are two colors to choose from there are $2B(n)$ colored partitions of $[n]$ avoiding these patterns.  
\end{proof}

\subsection{Class 4}

We will count the partitions that avoid $1^11^1,$ $1^11^2$, $1^2,1^1$, and $1^12^2$.  The other arguments are similar. 

\begin{thm}  For $n\geq 1$, 
$\left| \Pi_n \wr C_k (1^11^1, 1^11^2, 1^21^1, 1^12^2)\right|=n+1.$
\end{thm}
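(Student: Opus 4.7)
The plan is to peel off the restrictions one at a time and show that the avoidance conditions collapse the problem to a simple one-parameter count.

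First I would analyze what avoiding $1^11^1$ forces. Since any two same-block elements sharing a color form a copy of $1^11^1$ (recall $\red(22)=11$, so $1^21^2$ counts as a copy as well), and we only have two colors available, every block has at most two elements, and any two-element block must contain one element of each color.

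Next I would bring in the two remaining within-block restrictions, $1^11^2$ and $1^21^1$. A two-element block with distinct colors has its two elements in some order, and whichever way we assign the colors we either get the smaller element colored 1 and the larger colored 2 (a copy of $1^11^2$) or the reverse (a copy of $1^21^1$). Both are forbidden, so no block can contain two elements: every block must be a singleton. Thus the underlying uncolored partition is forced to be $1/2/\cdots/n$, and the only remaining freedom is how we color the $n$ elements.

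Finally I would invoke the cross-block restriction $1^12^2$. With each element in its own block, a copy of $1^12^2$ is simply a pair of indices $i<j$ with $i$ colored $1$ and $j$ colored $2$. Avoiding this means the coloring, read from left to right, must be a (possibly empty) run of $2$'s followed by a (possibly empty) run of $1$'s. Parametrizing by the number $i\in\{0,1,\dots,n\}$ of initial $2$-colored elements gives exactly $n+1$ admissible colorings, and hence $\bigl|\Pi_n\wr C_2(1^11^1,1^11^2,1^21^1,1^12^2)\bigr|=n+1$.

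There is no real obstacle here; the whole argument is a chain of forced-structure deductions, and the only thing to be careful about is remembering that $1^21^2$ reduces to $1^11^1$ so that the $1^11^1$ restriction really does forbid monochromatic pairs in either color.
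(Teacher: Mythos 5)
Your proof is correct and follows essentially the same route as the paper's: the three patterns $1^11^1$, $1^11^2$, $1^21^1$ together force every block to be a singleton, and then $1^12^2$ forces the coloring to be a run of 2's followed by a run of 1's, giving $n+1$ choices. The only difference is cosmetic — the paper first deduces "blocks are monochromatic" from $1^11^2,1^21^1$ and then contradicts that with $1^11^1$, while you start from $1^11^1$ and rule out the bicolored two-element blocks afterward.
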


\begin{proof}
Avoiding $1^11^2$ and $1^21^1$ means that every element in each block has the same color.  Avoiding $1^11^1$ means that every element in a block has a different color.  Thus, our partition must be entirely composed of blocks of size 1.  To avoid $1^12^2$ all of the elements colored 2 must appear before those colored 1.  There are $n+1$ ways to color such a partition.  
\end{proof}

\subsection{Avoiding 5 or 6 patterns}

The Wilf classes containing 5 patterns are mostly trivial, since all but two of them contain both $1^11^1$ and $1^12^1$.  The two interesting classes are $\{1^11^1,1^11^2,1^21^1,1^12^2,1^22^1\}$ and $\{1^12^1,1^11^2,1^21^1,1^12^2,1^22^1\}$.  Comparing to Class 2 in Table \ref{T:fourdata} we see that each partition in these sets must be monochromatic.  The pattern $1^11^1$ forces everything to be in its own block where the pattern $1^12^1$ forces everything to be in separate blocks.  Thus for each $n\geq 1$ there are two elements in $\Pi_n\wr C_2(S)$ where $S$ is one of the two sets above.

Obviously, if the class contains all six distinct patterns of $\Pi_2 \wr C_2$ then when $n\geq2$ there are no partitions avoiding all of these patterns.  

\section{Toward Further Generalization}\label{S:generalizations}

So far we have only considered enumeration of pattern avoiding partitions in $\Pi_n \wr C_2$, that is, where we only use 2 colors on our colored set partitions.  Certainly further generalization is possible, but as the number of colors $k$ increases, exact enumeration becomes much more complicated.  We do, however, present one result for partitions with $k$ colors.

\begin{thm}  $\displaystyle |\Pi_n\wr C_k(1^11^2,1^12^2,1^21^1)|=\sum_{i_1+i_2+\dots+i_k=n}B(i_1)B(i_2)\cdots B(i_k)$.  \end{thm}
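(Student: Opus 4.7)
The plan is to carry over, for general $k$, the structural analysis used for Class 7 in Section \ref{S:three} and then count. I first unpack what each of the three patterns forces on a $\sigma \in \Pi_n \wr C_k(1^11^2, 1^12^2, 1^21^1)$. Avoiding $1^11^2$ and $1^21^1$ forces every block to be monochromatic: two elements in the same block with distinct colors $c < c'$ would form a $1^11^2$ (with colors reducing to $1,2$) while two with $c > c'$ would form a $1^21^1$. So at this stage each block carries a single color $c \in [k]$.

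Next, I use the $1^12^2$ restriction to pin down how blocks with different colors sit relative to one another. Given two monochromatic blocks $B$ and $B'$ with colors $c < c'$, any $a \in B$ and $b \in B'$ with $a < b$ would reduce to $1^12^2$; hence all elements of $B$ are larger than all elements of $B'$. Equivalently, reading the colors in the order $1,2,\dots,n$, the sequence of colors is weakly decreasing, so $[n]$ splits into contiguous ``color layers'' with the color-$k$ layer first (smallest elements), then color $k-1$, and so on down to color $1$ (largest elements); any of these layers may be empty.

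Letting $i_c$ denote the size of the color-$c$ layer, the layers partition $[n]$, so $i_1 + i_2 + \cdots + i_k = n$, and the layers themselves are determined by the tuple $(i_1,\dots,i_k)$. Within a single color layer there is no further restriction: two elements of the same color can never witness $1^12^2$, and the monochromatic-block condition is already built in. So each color layer can be partitioned as an ordinary (uncolored) set partition, contributing $B(i_c)$ choices independently for each $c$. Multiplying over colors and summing over compositions gives
\[
|\Pi_n\wr C_k(1^11^2,1^12^2,1^21^1)| \;=\; \sum_{i_1+i_2+\dots+i_k=n} B(i_1)B(i_2)\cdots B(i_k),
\]
which specializes at $k=2$ to the Class 7 formula $\sum_{i=0}^n B(i)B(n-i)$, as expected.

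The main obstacle is the second step: carefully deriving the ``decreasing color layers'' structure from the single pattern $1^12^2$ for general $k$, and in particular checking that equal colors impose no constraint (so blocks of the same color may interleave freely, yet collapse correctly into a single layer for counting purposes). Once that layering is justified, and once one verifies that any layered, monochromatic-block configuration really avoids all three patterns (a direct check on each pattern), the enumeration reduces immediately to the stated convolution of Bell numbers.
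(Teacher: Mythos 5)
Your proposal is correct and follows essentially the same route as the paper's proof: monochromatic blocks from $1^11^2$ and $1^21^1$, the weakly decreasing color-layer structure from $1^12^2$, and then an independent Bell-number count within each layer, summed over compositions of $n$. The only difference is that you spell out the verification that every layered configuration actually avoids all three patterns, which the paper leaves implicit.
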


\begin{proof}  In order to avoid $1^11^2$ and $1^21^1$ every block may have at most one color.  To avoid $1^12^2$, for colors $c_1<c_2$ we must have that any element colored $c_1$ must appear after every element colored $c_2$.  Let $i_j$ be the number of elements colored $j$.  In this case, we must have that elements $n-i_1+1,\dots,n$ are colored 1, elements $n-i_1-i_2+1,\dots,n-i_1$ are colored 2, etc.  Once we color these elements, we partition the elements having the same colors.  There are $B(i_1)B(i_2)\cdots B(i_k)$ ways to do this.  Summing over all possible colorings gives us the result.  \end{proof}

Throughout this paper, we have considered colored set partitions that avoid 2-colored partitions of $[2]$.  Although we focused primarily on 2-colored set partitions avoiding other partitions in the pattern sense, we were able to completely characterize the appropriate Wilf-equivalence classes and provide bijective relationships with several other combinatorial objects.  Certainly further generalization is possible, particularly by increasing the number of available colors.

\bigskip
\hrule
\bigskip

\noindent 2010 {\it Mathematics Subject Classification}:
Primary 05A18; Secondary 05A15.

\noindent \emph{Keywords: } 
colored set partition, pattern avoidance.

\bigskip
\hrule
\bigskip

\noindent (Concerned with sequences \seqnum{A000027}, \seqnum{A000079}, \seqnum{A000110}, \seqnum{A000898}, \seqnum{A000918}, \seqnum{A001861},\\ \seqnum{A005425}, \seqnum{A005843}, \seqnum{A011965}, \seqnum{A014322}, \seqnum{A052889}, \seqnum{A052944}, \seqnum{A081124}, \seqnum{A208275}, \seqnum{A209629}, \seqnum{A209797}, \seqnum{A209798}, \seqnum{A209801}.)

\bigskip
\hrule
\bigskip

\vspace*{+.1in}
\noindent
Received  2012;
revised version received  2012. 
Published in {\it Journal of Integer Sequences},  2012.

\bigskip
\hrule
\bigskip

\noindent
Return to
\htmladdnormallink{Journal of Integer Sequences home page}{http://www.cs.uwaterloo.ca/journals/JIS/}.
\vskip .1in

\end{document}